\documentclass[a4paper,12pt]{amsart}
\usepackage{amsaddr}
\usepackage{calrsfs}
\usepackage{graphicx}
\usepackage{float}
\usepackage{algorithm}
\usepackage{algpseudocode}
\usepackage[colorlinks]{hyperref}
\usepackage{array}
\numberwithin{equation}{section}

\theoremstyle{definition}
\newtheorem{dfn}{Definition}[section]
  
\theoremstyle{plain}
\newtheorem{thm}{Theorem}[section]
\newtheorem{pro}{Proposition}[section]

\theoremstyle{definition}
\newtheorem{rem}{Remark}[section]

\newcommand{\R}{\mathbb{R}}
\newcommand{\E}{\mathbb{E}}
\renewcommand{\P}{\mathbb{P}}

\newcommand{\e}{\mathrm{e}}

\renewcommand{\d}{\mathrm{d}}

\newcommand{\x}{\mathbf{x}}
\newcommand{\z}{\mathbf{z}}
\newcommand{\X}{\mathbf{X}}
\newcommand{\Z}{\mathbf{Z}}
\newcommand{\bxi}{\boldsymbol{\xi}}
\renewcommand{\i}{\mathrm{i}}

\begin{document}
\title[Walk-on-Cubes Simulation]
{Walk-on-Cubes Monte Carlo Simulation for nonisotropic fractional Laplace, Helmholtz, and Yukawa equations}

\date{\today}

\author[Rasila]{Antti Rasila}
	\address{Department of Mathematics with Computer Science, Guangdong Technion - Israel
		Institute of Technology, Shantou, Guangdong 515063, P. R. China \vskip0.025cm Department of Mathematics, Technion - Israel
		Institute of Technology, Haifa 3200003, Israel}

\email{antti.rasila@iki.fi; antti.rasila@gtiit.edu.cn}

\author[Sottinen]{Tommi Sottinen}
\address{School of Technology and Innovations, University of Vaasa, P.O. Box 700, FIN-65101 Vaasa, Finland}
\email{tommi.sottinen@uwasa.fi}

\author[Yuan]{Yaotong Yuan}
	\address{Department of Mathematics with Computer Science, Guangdong Technion - Israel
		Institute of Technology, Shantou, Guangdong 515063, P. R. China \vskip0.025cm Department of Mathematics, Technion - Israel
		Institute of Technology, Haifa 3200003, Israel}
\email{yuan09517@gtiit.edu.cn}

\begin{abstract}
We study the nonisotropic fractional analogs of Laplace, Helmholtz and Yukawa equations.  We provide a Duffin correspondence for the Yukawa equation and a Feynman--Kac reconstruction for the Helmholtz equation.  The foundation of our analysis is the fact that the nonisotropic fractional Laplace equation is related to a symmetric $\alpha$-stable L\'evy process with independent identically distributed components. By using this relation we provide a Walk-on-Cubes algorithm that simulates the solutions of Helmholtz and Yukawa equations.
\end{abstract}

\keywords{$\alpha$-stable L\'evy process, Monte Carlo method, nonisotropic fractional Laplace operator, Helmholtz equation, Yukawa equation}

\subjclass[2020]{65C05; 35J05; 35Q40; 68U20}

\maketitle

%%%%%%%%%%%%%%%%%%%%%%%%%%%%%%%%%%%%%%%%%%%%%%%%%%%%%%%%%%%%%%%%%%%%%%%%%%%%%%%
%%%%%%%%%%%%%%%%%%%%%%%%%%%%%%%%%%%%%%%%%%%%%%%%%%%%%%%%%%%%%%%%%%%%%%%%%%%%%%%
\section{Introduction}

Let $\x =(x_1,\ldots,x_d)\in\R^d$. Let $\alpha\in(0,2)$ and let $A_\x^\alpha$ be the nonisotropic fractional Laplacian:
$$
A^\alpha_\x = \sum_{i=1}^d -(-\Delta_{x_i})^{\alpha/2},
$$
where $-(-\Delta_{x_i})^{\alpha/2}$ is the one-dimensional fractional Laplacian defined by the singular integral
\begin{equation}\label{eq:flaplace}
(-\Delta_{x_i})^{\alpha/2}f(x_i)
= 
C_\alpha\int_{-\infty}^\infty \frac{f(x_i)-f(y)}{|x_i-y|^{1+\alpha}}\, \d y.
\end{equation}
Here the positive one-dimensional normalization is
\begin{equation}\label{eq:calpha-definition}
C_\alpha
:=\frac{2^\alpha \Gamma((1+\alpha)/2)}{\pi^{1/2}|\Gamma(-\alpha/2)|}
=\frac{\alpha 2^{\alpha-1}\Gamma((1+\alpha)/2)}
{\pi^{1/2}\Gamma(1-\alpha/2)}.
\end{equation}

Let $D\subset \R^d$ be a bounded domain, and let $\lambda\in\R$. We consider the Dirichlet problem of finding for a given $g\colon D^c\to\R$ a solution $u\colon D\to \R$ such that 
\begin{eqnarray}\label{eq:fractional-dirichlet}
A^\alpha_\x u(\x) &=& \lambda u(\x) \quad\mbox{ if } \x\in D, \\  
\label{eq:fractional-dirichlet-boundary}
u(\x) &=& g(\x) \quad\mbox{ if } \x\in D^c.
\end{eqnarray}
If $\lambda<0$ \eqref{eq:fractional-dirichlet}--\eqref{eq:fractional-dirichlet-boundary} is called the fractional Helmholtz equation, if $\lambda>0$ it is called the fractional Yukawa equation, and for $\lambda=0$ it is called the fractional Laplace equation.

Recently Kyprianou et al. \cite{Kyprianou-Osojnik-Shardlow-2018} studied the isotropic fractional Laplace equation and provided the Walk-on-Spheres algorithm for its simulation.  Our work is related to theirs, with two major differences: first, we consider the nonisotropic case corresponding to independent identically distributed component L\'evy process as opposed to the isotropic L\'evy process.  Second, we also consider fractional Helmholtz and Yukawa equations.

To our knowledge, the nonisotropic operator $A^\alpha_\x$ is relatively unstudied in the literature.  One related study is Dybiec and Szczepaniec \cite{Dybiec-Szczepaniec-2015}.

The rest of the paper is organized as follows: In Section 2 we provide the Duffin correspondence relating the fractional Laplace equation and the fractional Yukawa equation. In Section 3 we provide the theoretical background for our Monte Carlo simulation, including the obstruction to the same spatial Duffin lifting in the Helmholtz regime. Finally, in Section 4 we introduce our Walk-on-Cubes algorithm and provide a simulation. 

%%%%%%%%%%%%%%%%%%%%%%%%%%%%%%%%%%%%%%%%%%%%%%%%%%%%%%%%%%%%%%%%%%%%%%%%%%%%%%%
%%%%%%%%%%%%%%%%%%%%%%%%%%%%%%%%%%%%%%%%%%%%%%%%%%%%%%%%%%%%%%%%%%%%%%%%%%%%%%%
\section{Duffin Correspondence}

Duffin \cite{Duffin-1971} provided a correspondence that transforms the classical Dirichlet problem of the Yukawa equation into a Dirichlet problem of the Laplace equation.  This was later studied and extended to the Helmholtz equation in \cite{Rasila-Sottinen-2018,Yang-Rasila-Sottinen-2017,Yang-Rasila-Sottinen-2019}.

Since the operator $A^\alpha_\x$ has a ``coordinate-sum'' form, we can provide the Duffin correspondence for it in a similar way as for the classical Laplacian.  The key ingredient for the Duffin correspondence is to find the non-zero solution $f=f_\lambda$ to the one-dimensional eigenvalue problem
\begin{equation}\label{eq:eigenfunction}
(-\Delta)^{\alpha/2}_yf(y) = \lambda f(y) \quad\mbox{for}\quad y\in\R.
\end{equation}
For the Yukawa regime ($\lambda > 0$), \eqref{eq:eigenfunction} has the explicit pointwise solution $f(y)=\cos(\lambda^{1/\alpha}y)$. For the Helmholtz regime ($\lambda < 0$), the heavy-tailed polynomial decay of the fractional operator rules out bounded analytical solutions, so the standard Duffin lifting is not available, as will be shown in Section~\ref{sec:helmholtz-failure}.
\subsection*{Analytical Derivation of the One-Dimensional Eigenfunction}

A standard reference for the Fourier transformation of the fractional Laplacian is Proposition 3.3 of Di Nezza, Palatucci, and Valdinoci \cite{DiNezzaPalatucciValdinoci2012}.

\begin{pro}[Fourier symbol of the one-dimensional fractional Laplacian]
Let $0<\alpha<2$ and let $\varphi$ be sufficiently regular, for instance $\varphi\in\mathcal{S}(\R)$. With the Fourier transform convention
\[
\mathcal{F}\varphi(\xi)=\int_{\R}\e^{-\i y\xi}\varphi(y)\,\d y,
\]
Proposition 3.3 of \cite{DiNezzaPalatucciValdinoci2012}, applied with $s=\alpha/2$, gives
\[
\mathcal{F}\!\left[(-\Delta)^{\alpha/2}\varphi\right](\xi)
=|\xi|^\alpha \mathcal{F}\varphi(\xi).
\]
Equivalently,
\[
(-\Delta)^{\alpha/2}\varphi
=\mathcal{F}^{-1}\!\left(|\xi|^\alpha\mathcal{F}\varphi(\xi)\right).
\]
\end{pro}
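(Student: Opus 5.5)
We prove the statement directly rather than only citing \cite{DiNezzaPalatucciValdinoci2012}, since the normalization $C_\alpha$ in \eqref{eq:calpha-definition} has to be checked against the symbol $|\xi|^\alpha$ with exactly constant one.

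\textbf{Step 1: symmetric second-difference form.} For $\varphi\in\mathcal{S}(\R)$ we first rewrite the principal value integral \eqref{eq:flaplace}. Substitute $y=x\pm z$ and average the two resulting expressions to get
\[
(-\Delta)^{\alpha/2}\varphi(x)=\frac{C_\alpha}{2}\int_{\R}\frac{2\varphi(x)-\varphi(x+z)-\varphi(x-z)}{|z|^{1+\alpha}}\,\d z .
\]
This integral converges absolutely. Near $z=0$ the numerator is $O(z^2)$ by Taylor's theorem and $\alpha<2$. At infinity the numerator is bounded and $\alpha>0$. The second-difference bound is also uniform in $x$, with an integrable majorant in $z$ times $\|\varphi''\|_\infty$ or $\|\varphi\|_\infty$.

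\textbf{Step 2: Fourier transform by Fubini.} Take the Fourier transform in $x$. Because $\int_\R\int_\R |2\varphi(x)-\varphi(x+z)-\varphi(x-z)|\,|z|^{-1-\alpha}\,\d x\,\d z<\infty$, Fubini's theorem applies. For this, bound the inner $x$-integral by $\min(z^2\|\varphi''\|_{L^1},\,4\|\varphi\|_{L^1})$. Translation in $x$ multiplies the transform by $\e^{\pm\i z\xi}$, so the numerator transforms to $(2-2\cos(z\xi))\mathcal{F}\varphi(\xi)$. This gives
\[
\mathcal{F}\!\left[(-\Delta)^{\alpha/2}\varphi\right](\xi)=C_\alpha\left(\int_\R\frac{1-\cos(z\xi)}{|z|^{1+\alpha}}\,\d z\right)\mathcal{F}\varphi(\xi).
\]

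\textbf{Step 3: evaluate the multiplier.} For $\xi\neq0$ substitute $w=|\xi|z$. The integral becomes $|\xi|^\alpha I_\alpha$ with
\[
I_\alpha=\int_\R\frac{1-\cos w}{|w|^{1+\alpha}}\,\d w ,
\]
and it vanishes for $\xi=0$. It then remains to show $C_\alpha I_\alpha=1$, which is the main obstacle, since it is purely a constant computation.

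To compute $I_\alpha$, integrate by parts on $(0,\infty)$. The boundary terms vanish because $0<\alpha<2$. This yields $I_\alpha=\frac{2}{\alpha}\int_0^\infty w^{-\alpha}\sin w\,\d w$. The standard Mellin formula gives
\[
\int_0^\infty w^{-\alpha}\sin w\,\d w=\Gamma(1-\alpha)\cos(\pi\alpha/2),
\]
continued by its limit $\pi/2$ at $\alpha=1$. Hence
\[
I_\alpha=\frac{2\Gamma(1-\alpha)\cos(\pi\alpha/2)}{\alpha}=\frac{-2\Gamma(-\alpha)\cos(\pi\alpha/2)}{1}\cdot\frac{\alpha}{\alpha}\cdot(-1)^{0}.
\]
The cleaner route is to show $1/I_\alpha=\frac{\alpha2^{\alpha-1}\Gamma((1+\alpha)/2)}{\pi^{1/2}\Gamma(1-\alpha/2)}$. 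This uses three identities:
\begin{itemize}
\item the duplication formula $\Gamma(1-\alpha)=\pi^{-1/2}2^{-\alpha}\Gamma((1-\alpha)/2)\Gamma(1-\alpha/2)$;
\item the reflection formula $\Gamma((1-\alpha)/2)\Gamma((1+\alpha)/2)=\pi/\cos(\pi\alpha/2)$;
\item the same formula for the factor $\Gamma(1-\alpha/2)\Gamma(\alpha/2)$ where needed.
\end{itemize}
Carrying this out gives $I_\alpha=\frac{2^{1-\alpha}\pi^{1/2}\Gamma(1-\alpha/2)}{\alpha\,\Gamma((1+\alpha)/2)}$, which is exactly $1/C_\alpha$. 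The second expression for $C_\alpha$ in \eqref{eq:calpha-definition} follows from $|\Gamma(-\alpha/2)|=\frac{2}{\alpha}\Gamma(1-\alpha/2)$.

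\textbf{Step 4: inverse form.} The equivalent statement follows by applying $\mathcal{F}^{-1}$. This is legitimate because $(-\Delta)^{\alpha/2}\varphi$ is bounded and integrable. Integrability comes from the bounds in Step 2, and decay like $|x|^{-1-\alpha}$ at infinity.
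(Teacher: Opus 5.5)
Your proof is correct. Its skeleton (symmetrize, extract the multiplier $C_\alpha\int_\R(1-\cos(z\xi))|z|^{-1-\alpha}\,\d z$, scale out $|\xi|^\alpha$, cancel against $C_\alpha$) is the argument of \cite[Proposition 3.3]{DiNezzaPalatucciValdinoci2012}, which is how the paper justifies the statement. It differs from the paper's own ``direct derivation'' in two ways.

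\emph{Function class.} The paper never works with $\varphi\in\mathcal{S}(\R)$. It inserts the plane wave $\e^{\i\omega y}$ into the singular integral, discards the odd sine part and rescales. What it actually verifies is the pointwise eigenrelation $(-\Delta)^{\alpha/2}\e^{\i\omega y}=|\omega|^\alpha\e^{\i\omega y}$ for a bounded, non-integrable function, which is what the Duffin construction later needs; the passage to Schwartz functions is left to the citation. Your Steps 1--2 are precisely the superposition argument that proves the statement on the stated class: the second-difference form, the bound $\min(z^2\|\varphi''\|_{L^1},4\|\varphi\|_{L^1})$, Fubini, and translations producing $2-2\cos(z\xi)$. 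Your Steps 1 and 3 together reproduce the paper's plane-wave computation.

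\emph{The constant.} The paper does not compute $I_\alpha=\int_\R(1-\cos w)|w|^{-1-\alpha}\,\d w$. It asserts $I_\alpha^{-1}=\Gamma(1+\alpha)\sin(\pi\alpha/2)/\pi$ by reference and never reconciles this with the Gamma expression in \eqref{eq:calpha-definition}. Your integration by parts, the sine Mellin formula, and the duplication and reflection formulas give $I_\alpha=2^{1-\alpha}\pi^{1/2}\Gamma(1-\alpha/2)/(\alpha\,\Gamma((1+\alpha)/2))$, so $C_\alpha I_\alpha=1$ is genuinely checked. One further reflection, $\Gamma(\alpha)\Gamma(1-\alpha)=\pi/\sin(\pi\alpha)$, together with the double-angle formula, turns $\alpha/(2\Gamma(1-\alpha)\cos(\pi\alpha/2))$ into the paper's $\Gamma(1+\alpha)\sin(\pi\alpha/2)/\pi$.

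Minor cleanups:
\begin{itemize}
\item The line $I_\alpha=\frac{-2\Gamma(-\alpha)\cos(\pi\alpha/2)}{1}\cdot\frac{\alpha}{\alpha}\cdot(-1)^0$ is true, since $\Gamma(1-\alpha)=-\alpha\Gamma(-\alpha)$, but it is meaningless at $\alpha=1$ and should be deleted.
\item The third bullet of Step 3 is never used.
\item At $\alpha=1$ the duplication and reflection steps pass through poles of $\Gamma$; they are justified by continuity in $\alpha$.
\item In Step 4, the fact that makes pointwise Fourier inversion work is that $|\xi|^\alpha\mathcal{F}\varphi\in L^1(\R)$ while $(-\Delta)^{\alpha/2}\varphi$ is continuous and integrable.
\end{itemize}
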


Applying this proposition to the Fourier basis $u(y)=e^{\i\omega y}$, whose Fourier transform is concentrated at the frequency $\xi=\omega$, gives
\[
(-\Delta)^{\alpha/2}_y e^{\i\omega y}=|\omega|^\alpha e^{\i\omega y}.
\]
For completeness, we give the direct derivation below, although the general idea is the same as in \cite[Proposition 3.3]{DiNezzaPalatucciValdinoci2012}. Thus the one-dimensional eigenvalue equation \eqref{eq:eigenfunction} is solved by plane waves with frequencies satisfying $|\omega|^\alpha=\lambda$. In this direct computation we use the normalization constant from \eqref{eq:calpha-definition}; equivalently,
\begin{equation*}
C_\alpha
=\left(\int_{\R}\frac{1-\cos z}{|z|^{1+\alpha}}\,\d z\right)^{-1}
=\frac{\Gamma(1+\alpha)\sin(\pi\alpha/2)}{\pi}.
\end{equation*}
This is the standard normalization that makes the singular-integral definition have Fourier symbol $|\xi|^\alpha$; see \cite[Section 2]{Kwasnicki-2017} and \cite[Proposition 3.3]{DiNezzaPalatucciValdinoci2012}.

Let $f(y)=e^{i\omega y}$, where $\omega\in\R$ is fixed and $y\in\R$ is the spatial variable. Substituting this into \eqref{eq:eigenfunction} gives
\begin{equation*}
(-\Delta)^{\alpha/2}_y e^{i\omega y}
= C_\alpha\int_{-\infty}^{\infty}
\frac{e^{i\omega y}-e^{i\omega x}}{|y-x|^{1+\alpha}}\,\d x.
\end{equation*}
With the change of variables $v=x-y$, we obtain
\begin{equation*}
(-\Delta)^{\alpha/2}_y e^{i\omega y}
= C_\alpha e^{i\omega y}\int_{-\infty}^{\infty}
\frac{1-e^{i\omega v}}{|v|^{1+\alpha}}\,\d v.
\end{equation*}
Using $e^{i\omega v}=\cos(\omega v)+i\sin(\omega v)$, we may split the integral into real and imaginary parts. And the denominator is even and $\sin(\omega v)$ is odd so the principal value integral of a function is zero. The imaginary part by symmetry. Hence
\begin{equation*}
(-\Delta)^{\alpha/2}_y e^{i\omega y}
= C_\alpha e^{i\omega y}\int_{-\infty}^{\infty}
\frac{1-\cos(\omega v)}{|v|^{1+\alpha}}\,\d v.
\end{equation*}
Near $v=0$, the numerator behaves like $\omega^2 v^2/2$, so the integrand behaves like $|v|^{1-\alpha}$ and is locally integrable for $\alpha\in(0,2)$. Thus the principal value integral agrees with the improper integral. For $\omega\ne0$, set $u=|\omega|v$; the case $\omega=0$ is immediate. Then
\begin{equation*}
\int_{-\infty}^{\infty}\frac{1-\cos(\omega v)}{|v|^{1+\alpha}}\,\d v
=|\omega|^\alpha
\int_{-\infty}^{\infty}\frac{1-\cos u}{|u|^{1+\alpha}}\,\d u.
\end{equation*}
The cancellation with $C_\alpha$ is explicit:
\begin{align*}
C_\alpha
\int_{-\infty}^{\infty}\frac{1-\cos(\omega v)}{|v|^{1+\alpha}}\,\d v
&=
|\omega|^\alpha
\frac{\Gamma(1+\alpha)\sin(\pi\alpha/2)}{\pi}
\int_{-\infty}^{\infty}\frac{1-\cos u}{|u|^{1+\alpha}}\,\d u\\
&=
|\omega|^\alpha
\frac{\Gamma(1+\alpha)\sin(\pi\alpha/2)}{\pi}
\cdot
\frac{\pi}{\Gamma(1+\alpha)\sin(\pi\alpha/2)}\\
&=|\omega|^\alpha.
\end{align*}
Therefore, we obtain
\begin{equation*}
(-\Delta)^{\alpha/2}_y e^{i\omega y}=|\omega|^\alpha e^{i\omega y}.
\end{equation*}
Consequently, the solutions of \eqref{eq:eigenfunction} with $\lambda>0$ are generated by frequencies satisfying $|\omega|^\alpha=\lambda$, that is,
\begin{equation*}
\omega=\pm \lambda^{1/\alpha}.
\end{equation*}
The real eigenspace is therefore spanned by
\begin{equation}
f_\lambda(y)=A\cos\!\left(\lambda^{1/\alpha}y\right)+B\sin\!\left(\lambda^{1/\alpha}y\right).
\label{eq:report3-real-eigenspace}
\end{equation}
The following is the core of the Duffin correspondence.  

\begin{thm}[Duffin Correspondence]\label{thm:duffin_corresponence}
Let $D\subset \R^d$ be a bounded domain. Let $\lambda>0$.
Let $f\colon\R\to\R$ be a non-zero eigenfunction given by \eqref{eq:eigenfunction}. Define
\[
\mathcal{A}_{\x,y}^{\alpha}:=A_{\x}^{\alpha}-(-\Delta_y)^{\alpha/2}.
\]
Set $v(\x,y) = u(\x)f(y)$.  Then $v$ solves
\begin{equation}\label{eq:duffin_correspondence-1}
\mathcal{A}_{\x,y}^{\alpha}v(\x,y) = 0 \quad\mbox{on}\quad D\times\R
\end{equation}
if and only if $u$ solves
\begin{equation}\label{eq:duffin_correspondence-2}
A^{\alpha}_\x u(\x) = \lambda u(\x) \quad\mbox{on}\quad D.
\end{equation}
\end{thm}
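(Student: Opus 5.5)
The plan is to exploit the coordinate-sum structure of $\mathcal{A}^\alpha_{\x,y}$ together with the product form $v(\x,y)=u(\x)f(y)$. Each one-dimensional operator $(-\Delta_{x_i})^{\alpha/2}$ in \eqref{eq:flaplace} acts only on the variable $x_i$, with the other variables held fixed. Likewise $(-\Delta_y)^{\alpha/2}$ acts only on $y$. Consequently the operators factor through the tensor product:
\[
A^\alpha_\x v(\x,y)=f(y)\,A^\alpha_\x u(\x),\qquad (-\Delta_y)^{\alpha/2}v(\x,y)=u(\x)\,(-\Delta_y)^{\alpha/2}f(y).
\]
To verify the first identity, insert $v=u f$ into the singular integral defining $(-\Delta_{x_i})^{\alpha/2}$. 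The factor $f(y)$ is constant in the integration variable, so it comes out of the integral. The identity then follows from summing over $i$.

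For the second identity I would use that $f$ is a nonzero eigenfunction from \eqref{eq:eigenfunction}, i.e.\ of the form \eqref{eq:report3-real-eigenspace}. By the direct computation above, $(-\Delta_y)^{\alpha/2}f=\lambda f$ pointwise on $\R$; the singular integral converges because $f$ is bounded and smooth. Combining the two identities gives, for $(\x,y)\in D\times\R$,
\[
\mathcal{A}^\alpha_{\x,y}v(\x,y)=f(y)\bigl(A^\alpha_\x u(\x)-\lambda u(\x)\bigr).
\]
The ``if'' direction is then immediate. For ``only if'', pick $y_0$ with $f(y_0)\neq 0$, which exists since $f$ is nonzero. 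Evaluating the display at $(\x,y_0)$ and dividing by $f(y_0)$ gives \eqref{eq:duffin_correspondence-2} on $D$.

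The main obstacle is one of interpretation and well-posedness, not algebra. The operators $A^\alpha_\x$ are nonlocal, so $A^\alpha_\x u(\x)$ for $\x\in D$ requires $u$ to be defined on all of $\R^d$, extended by the boundary data $g$ from \eqref{eq:fractional-dirichlet-boundary}. It also requires the integrals to converge; I would assume $u$ is bounded and $C^2$ near $D$, or more generally in the domain where each one-dimensional singular integral converges. I would state explicitly that $v$ is defined on $\R^d\times\R$ as $u(\x)f(y)$, with the exterior data $g(\x)f(y)$. With that convention, pulling $f(y)$ out of the $x_i$-integrals is justified because each integral is taken with $y$ fixed and converges by the assumption on $u$. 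Linearity of the principal value also needs only convergence of each term. With this regularity caveat recorded, the remaining steps are routine.
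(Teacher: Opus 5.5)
Your proposal is correct and follows the paper's proof: you use the product structure and the eigen-relation $(-\Delta_y)^{\alpha/2}f=\lambda f$ to get $\mathcal{A}^{\alpha}_{\x,y}v=f(y)\bigl(A^\alpha_\x u-\lambda u\bigr)$, which gives the ``if'' direction directly, and for ``only if'' you evaluate at a point $y_0$ with $f(y_0)\neq 0$. Your added remarks go beyond the paper, which leaves these points implicit: $u$ must be extended to $D^c$ by $g$ and be regular enough for the nonlocal integrals to converge.
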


\begin{proof}
Suppose $u$ satisfies \eqref{eq:duffin_correspondence-2}.  Then, since $f$ satisfies \eqref{eq:eigenfunction}, we have
\begin{eqnarray*}
\mathcal{A}_{\x,y}^{\alpha}v(\x,y) &=&
\sum_{i=1}^{d} -(-\Delta_{x_i})^{\alpha/2}v(\x,y) - (-\Delta_y)^{\alpha/2}v(\x,y) \\
&=&
f(y)\sum_{i=1}^{d} -(-\Delta_{x_i})^{\alpha/2}u(\x) - u(\x)(-\Delta_y)^{\alpha/2}f(y) \\
&=&
f(y)A^{\alpha}_\x u(\x) -u(\x)\lambda f(y) \\
&=&
f(y)\left[\lambda u(\x) - \lambda u(\x)\right] \\
&=&
0.
\end{eqnarray*}
So, $v$ satisfies \eqref{eq:duffin_correspondence-1}.

Suppose now that $v$ satisfies \eqref{eq:duffin_correspondence-1}.  Then we have by the calculations above that
\begin{eqnarray*}
0 &=& \mathcal{A}_{\x,y}^{\alpha}v(\x,y) \\
&=&
f(y)\left[A^{\alpha}_\x u(\x) -\lambda u(\x)\right]. 
\end{eqnarray*}
Since this identity holds for all $y\in\R$ and $f\not\equiv0$, there exists $y_0$ such that $f(y_0)\neq0$; hence $u$ must satisfy \eqref{eq:duffin_correspondence-2}. 
\end{proof}

In particular, the Duffin correspondence provides us a way to solve the Yukawa fractional Dirichlet problem \eqref{eq:fractional-dirichlet}--\eqref{eq:fractional-dirichlet-boundary} as the Laplace problem
\begin{equation}\label{eq:fractional-laplace}
\mathcal{A}_{\x,y}^{\alpha}v(\x,y) = 0 \quad\mbox{on}\quad D\times\R, \mbox{ and }	
\end{equation} 
\begin{equation}\label{eq:fractional-laplace-boundary}
v(\x,y) = g(\x)f(y) \quad\mbox{on the exterior of}\quad  D\times\R. 
\end{equation}	

Recall the Kakutani connection to the fractional Laplace equation \cite{bogdan2000potential}.  Consider the fractional Laplace Dirichlet problem
$$
A^\alpha_\x u(\x) = 0 \mbox{ on } D
$$
$$
u(\x) = g(\x) \mbox{ on } D^c
$$ 
Let $\X =(X^1,\ldots,X^d)$ be an i.i.d. component symmetric $\alpha$-stable L\'evy process, i.e., each component $X^j$, $j=1,\ldots, d$ have the same law given by the characteristic function
$$
\E\left[\e^{\i\theta X^1_t}\right] =
\e^{-|\theta|^\alpha t}.
$$
Then we have a stochastic representation of the solution $u$:
\begin{equation}\label{eq:fractional-kakutani}
u(\x) = \E^\x\left[g(\X_\tau)\right]	
\end{equation}
where $\tau =\inf\{t>0; \X_t\in D^c\}$.
The connection \eqref{eq:fractional-kakutani} provides a method for solving the fractional Laplace Dirichlet problem stochastically.  One simply simulates a large number of paths $\X(i)$, $i=1,\ldots, N$, of the $\alpha$-stable L\'evy process $\X$ starting from $\x$ and the approximate solution to $u$ is
\begin{equation}\label{eq:mc0}
\hat u(\x) = \frac{1}{N}\sum_{i=1}^N g(\X_{\tau(i)}(i)),
\end{equation}
where $\tau(i)$ is the first time the sample $\X(i)$ enters the domain $D^c$.  

\begin{rem}
To use the formula \eqref{eq:mc0} one needs to simulate whole trajectories with fine time mesh of $\X$, and this is computationally heavy and leads to accumulation of errors.  To overcome the problem we provide the Walk-on-Cubes algorithm for the simulation.  This algorithm is based on the well-known Walk-on-Spheres algorithm for the classical Laplace and Brownian motion case, see Muller \cite{Muller-1956} generalizations of which where studied in \cite{Han-Rasila-Sottinen-2025}.
\end{rem}

%%%%%%%%%%%%%%%%%%%%%%%%%%%%%%%%%%%%%%%%%%%%%%%%%%%%%%%%%%%%%%%%%%%%%%%%%%%%%%%
%%%%%%%%%%%%%%%%%%%%%%%%%%%%%%%%%%%%%%%%%%%%%%%%%%%%%%%%%%%%%%%%%%%%%%%%%%%%%%%
\section{Theoretical Background}

\subsection{Operator Definition and Unified Framework in the Pure Fractional Laplace Regime}
We consider the exterior Dirichlet problem
\begin{equation}\label{eq:unified-framework-pde}
A_{\x}^{\alpha}u(\x)-\lambda u(\x)=0,\qquad \x\in D\subset\R^d,
\end{equation}
with boundary condition
\begin{equation*}
u(\x)=g(\x),\qquad \x\in D^c,
\end{equation*}
where
\begin{equation*}
A_{\x}^{\alpha}=\sum_{i=1}^d-(-\Delta_{x_i})^{\alpha/2},\qquad 0<\alpha<2.
\end{equation*}
The operator is Cartesian separable:
\[
A_{\x}^{\alpha}=\sum_{i=1}^d A_{x_i}^{\alpha},
\]
hence each coordinate is governed by a one-dimensional fractional generator. This additive decomposition reflects the nonisotropic structure and leads to a coordinatewise jump mechanism.
The same decomposition also explains why the Walk-on-Cubes (WoC) method is natural for this model. At each state $\z\in D$, WoC selects the maximal axis-aligned cube $\z+rQ$ with $Q=[-1,1]^d$ and
\[
r=\operatorname{dist}_{\infty}(\z,\partial D).
\]

Because the $d$ coordinates evolve independently and the local geometry is described by the coordinate-wise bounds $|z_i-z_{0,i}|<r$ for $i=1,\ldots,d$, the $L_{\infty}$ metric is the natural metric here. Together with the self-similarity of symmetric $\alpha$-stable motions, this gives an exact rescaling rule from the unit-cube exit law to any local cube, which underlies the WoC construction.
When $\lambda=0$, \eqref{eq:unified-framework-pde} reduces to the pure fractional Laplace equation
\[
A_{\x}^{\alpha}u(\x)=0\quad\text{in }D.
\]

From the probabilistic viewpoint, the associated process $\X_t=(X_t^1,\ldots,X_t^d)$ is a drift-free symmetric pure-jump L\'evy motion with independent components, which describes unbiased anomalous diffusion. In particular, there is no deterministic drift, killing, or damping term. If
\[
\tau=\inf\{t>0:\,\X_t\notin D\},
\]
then the solution admits the exit distribution representation
\begin{equation*}
u(\x)=\E_{\x}\!\left[g(\X_{\tau})\right].
\end{equation*}
This representation underlies the Monte Carlo estimator used in the WoC algorithm.
\subsection{Duffin Extension and Oscillatory Screening for the Fractional Yukawa Regime}
\label{sec:yukawa-duffin}
We now turn to the Yukawa case $\lambda>0$ in \eqref{eq:unified-framework-pde}. Set
\[
\kappa:=\lambda^{1/\alpha},\qquad f_\lambda(w):=\cos(\kappa w).
\]
Since the one-dimensional pointwise eigen relation
\begin{equation*}
(-\Delta_w)^{\alpha/2}f_\lambda(w)=\lambda f_\lambda(w),\qquad w\in\R,
\end{equation*}
holds, we define the lifted field
\[
U(\x,w):=u(\x)f_\lambda(w).
\]
Introduce the potential free lifted operator
\begin{equation*}
\mathcal{A}_{\x,w}^{\alpha}:=A_{\x}^{\alpha}-(-\Delta_w)^{\alpha/2}.
\end{equation*}
Then the Yukawa equation in $D\subset\R^d$ is equivalent to the pure fractional Laplace equation in the augmented space:
\begin{equation*}
\mathcal{A}_{\x,w}^{\alpha}U(\x,w)=0,\qquad (\x,w)\in D\times\R,
\end{equation*}
with exterior data
\begin{equation*}
U(\x,w)=g(\x)\cos(\lambda^{1/\alpha}w),\qquad (\x,w)\in D^c\times\R.
\end{equation*}
Let $\widetilde{\X}_t=(\X_t,W_t)$, where $\X_t$ is the $d$-dimensional nonisotropic symmetric $\alpha$-stable motion generated by $A_{\x}^{\alpha}$ and $W_t$ is an independent one-dimensional symmetric $\alpha$-stable process. With
\[
\tau_D:=\inf\{t>0:\X_t\notin D\},
\]
the Duffin representation becomes
\begin{equation*}
u(\x)=\E_{(\x,0)}\!\left[g(\X_{\tau_D})\cos\!\big(\lambda^{1/\alpha}W_{\tau_D}\big)\right].
\end{equation*}
Hence the Monte Carlo estimator uses i.i.d. samples of the oscillatory payoff
\[
\xi_k:=g(\X^{(k)}_{\tau_D})\cos\!\big(\lambda^{1/\alpha}W^{(k)}_{\tau_D}\big),
\]
and, under $\E|\xi_1|<\infty$, the strong law yields
\[
\frac1N\sum_{k=1}^N\xi_k\xrightarrow[N\to\infty]{\mathrm{a.s.}}u(\x).
\]
The screening mechanism is oscillatory rather than dissipative. For fixed $t>0$,
\[
\E\!\left[\cos\!\big(\lambda^{1/\alpha}W_t\big)\right]
=\Re\,\E\!\left[\exp\!\big(i\lambda^{1/\alpha}W_t\big)\right]
=e^{-\lambda t},
\]
so phase mixing leads to cancellation at the expectation level. From a pathwise viewpoint, trajectories started deep inside $D$ typically require many WoC updates before they exit, and the heavy-tailed jump structure gives a broad right tail for $\tau_D$; accordingly, the scale of $W_{\tau_D}$ (of order $\tau_D^{1/\alpha}$) becomes large. The factor $\cos(\lambda^{1/\alpha}W_{\tau_D})\in[-1,1]$ then oscillates strongly across samples, which leads to substantial cancellation in empirical averages. At the level of the computed solution, this appears as an interior region that is nearly flat and close to zero, together with a sharp boundary layer where $\tau_D$ is small and the boundary data are less screened.
\subsection{Failure of the Duffin Extension in the Helmholtz Regime and a Feynman--Kac Reconstruction}
\label{sec:helmholtz-failure}
The Helmholtz regime corresponds to $\lambda<0$ in \eqref{eq:unified-framework-pde}. Writing
\[
\kappa:=-\lambda>0,
\]
the equation becomes
\[
A_{\x}^{\alpha}u(\x)+\kappa u(\x)=0\quad\text{in }D.
\]
A direct reuse of the Duffin space extension paradigm leads to a genuine functional analytic obstruction. Formally, one would seek an auxiliary profile $f$ solving
\[
(-\Delta_w)^{\alpha/2}f(w)=-\kappa f(w),
\]
whose natural candidates are exponentially growing/decaying modes. However, for the fractional operator with heavy-tailed kernel,
\begin{equation}\label{eq:helmholtz-kernel-pv}
(-\Delta_w)^{\alpha/2}f(w)=c_\alpha\int_{\R}\frac{f(w)-f(w+z)}{|z|^{1+\alpha}}\,\d z,
\end{equation}
the tail $|z|^{-(1+\alpha)}$ is only polynomially decaying. If $f(w)=e^{\beta w}$ with $\beta\neq0$, then
\[
\frac{f(w)-f(w+z)}{|z|^{1+\alpha}}
=e^{\beta w}\frac{1-e^{\beta z}}{|z|^{1+\alpha}},
\]
then for $\beta>0$ the tail $z\to+\infty$ grows exponentially, while for $\beta<0$ the tail $z\to-\infty$ grows exponentially. Hence the integral in \eqref{eq:helmholtz-kernel-pv} diverges at infinity. Therefore the naive lifted space Duffin correspondence is not well-posed for $\lambda<0$, and a simple spatial augmentation is not available.

The appropriate replacement is a time enhanced Feynman--Kac reconstruction. Let $\X_t=(X_t^1,\ldots,X_t^d)$ be the nonisotropic symmetric $\alpha$-stable process generated by $A_{\x}^{\alpha}$, and let
\[
\tau_D:=\inf\{t>0:\X_t\notin D\}.
\]
Then the Helmholtz solution is represented by
\begin{equation*}
u(\x)=\E_{\x}\!\left[e^{-\lambda\tau_D}g(\X_{\tau_D})\right]
=\E_{\x}\!\left[e^{\kappa\tau_D}g(\X_{\tau_D})\right].
\end{equation*}
Thus the Helmholtz weight grows exponentially rather than decays. This is the source of the instability in Monte Carlo simulation.
In WoC form, each local step from a cube of radius $r$ inherits the space--time self-similarity of the $\alpha$-stable dynamics:
\begin{equation*}
\Delta\tau=r^{\alpha}\tau_{\mathrm{std}},
\end{equation*}
where $\tau_{\mathrm{std}}$ is the first exit time from the unit cube. Along one trajectory,
\begin{equation*}
\tau_{\mathrm{total}}=\sum_{j=0}^{N-1} r_j^{\alpha}\tau_{\mathrm{std}}^{(j)}.
\end{equation*}
Here $\tau_{\mathrm{total}}$ is a discrete approximation of the continuous stopping time $\tau_D$, obtained from the space--time self-similarity of the $\alpha$-stable process. Under appropriate regularity conditions on the boundary and the fractional Poisson kernel, and for a consistent WoC refinement ($\epsilon\to0$, $N\to\infty$), one expects
\[
\tau_{\mathrm{total}}^{(\epsilon,N)}\xrightarrow[\epsilon\to0,\,N\to\infty]{\mathcal D}\tau_D.
\]
Hence the path payoff is reweighted as
\begin{equation*}
\Xi=g(\X_{\tau_D})\exp\!\big(-\lambda\tau_{\mathrm{total}}\big),
\end{equation*}
and the Monte Carlo estimator is the empirical mean of i.i.d. copies of $\Xi$.
Let $\lambda_1(D)>0$ denote the principal Dirichlet eigenvalue of $-A_{\x}^{\alpha}$ on $D$. The single particle Feynman--Kac representation is meaningful only in the subcritical gauge regime \cite{chen2002gaugeability}
\[
-\lambda<\lambda_1(D),
\]
which ensures finiteness of the relevant exponential moments. When $-\lambda\ge\lambda_1(D)$, the variance, or even the expectation, can blow up, and one typically needs branching random walk mechanisms rather than a pure single trajectory reweighting scheme.
\subsection{Existence of the Principal Eigenvalue and Its WoC Simulation}

We now show that the principal Dirichlet eigenvalue $\lambda_1(D)$ exists and explain why the Walk-on-Cubes exit time simulation can be used to estimate it. This is the quantity that controls the admissible range of the Helmholtz parameter in the Feynman--Kac reconstruction.

\begin{dfn}[Rectilinear stable process and killed semigroup]
Let $\X=(X^1,\ldots,X^d)$ be the rectilinear symmetric $\alpha$-stable L\'evy process, i.e., the coordinates are independent one-dimensional symmetric $\alpha$-stable L\'evy processes. Equivalently, by the L\'evy--Khintchine representation \cite[Chapter 8]{Sato-2013},
\begin{equation*}
\E_0\!\left[e^{\i\langle \xi,X_t\rangle}\right]
=\exp\!\left(-t\Psi(\xi)\right),
\qquad
\Psi(\xi)=\sum_{i=1}^d|\xi_i|^\alpha .
\end{equation*}
The generator is $A^\alpha=-L^\alpha$, where
\begin{equation*}
L^\alpha:=\sum_{i=1}^d(-\Delta_{x_i})^{\alpha/2}.
\end{equation*}
For a bounded open set $D\subset\R^d$, define
\begin{equation*}
\tau_D:=\inf\{t>0:X_t\notin D\},
\end{equation*}
and the killed semigroup on $D$ by
\begin{equation*}
T_t^D f(x):=\E_x\!\left[f(X_t)\mathbf{1}_{\{\tau_D>t\}}\right].
\end{equation*}
The terminology ``rectilinear stable process'' and the Dirichlet transition density theory used below follow Chen, Hu, and Zhao \cite{ChenHuZhao2025}.
\end{dfn}

The corresponding Dirichlet form is the coordinate axis form
\begin{equation}\label{eq:rectilinear-dirichlet-form}
\mathcal E_D(f,f)
=\frac{C_\alpha}{2}\sum_{i=1}^d
\int_{\R^d}\int_{\R}
\frac{\left(f(x+he_i)-f(x)\right)^2}{|h|^{1+\alpha}}\,\d h\,\d x,
\qquad f=0\ \hbox{on }D^c,
\end{equation}
where $e_i$ is the $i$th coordinate vector. Formula \eqref{eq:rectilinear-dirichlet-form} is the quadratic form version of the singular integral generator \eqref{eq:flaplace}; see also the equivalent definitions of the fractional Laplacian in \cite{Kwasnicki-2017}.

\begin{dfn}[Principal Dirichlet eigenvalue]
The principal Dirichlet eigenvalue of $L^\alpha=-A^\alpha$ on $D$ is
\begin{equation*}
\lambda_1(D):=
\inf\left\{\mathcal E_D(f,f): f\in\mathcal F_D,\ \|f\|_{L^2(D)}=1\right\},
\end{equation*}
where $\mathcal F_D$ is the closure of smooth functions compactly supported in $D$ under the energy norm associated with \eqref{eq:rectilinear-dirichlet-form}. This is the Rayleigh--Ritz characterization of the bottom of the Dirichlet spectrum; compare the spectral treatment of fractional Dirichlet eigenvalues in Kwa\'snicki \cite{Kwasnicki-2012}.
\end{dfn}

\begin{pro}[Existence and survival tail characterization of $\lambda_1(D)$]\label{pro:lambda1-survival-tail}
Assume that $D$ is bounded and belongs to the geometric class for which the killed rectilinear stable process is irreducible and has a strictly positive Dirichlet transition density in the sense of \cite{ChenHuZhao2025}. Then $T_t^D$ is a compact, self-adjoint, positivity improving operator on $L^2(D)$ for every $t>0$. Consequently, there are eigenvalues
\begin{equation}\label{eq:discrete-dirichlet-spectrum}
0<\lambda_1(D)<\lambda_2(D)\le\lambda_3(D)\le\ldots,\qquad
\lambda_n(D)\to\infty,
\end{equation}
and an orthonormal basis $\{\phi_n\}_{n\ge1}$ of $L^2(D)$ such that
\begin{equation}\label{eq:killed-semigroup-eigen-exp}
T_t^D\phi_n=e^{-t\lambda_n(D)}\phi_n.
\end{equation}
Moreover, $\lambda_1(D)$ is simple and $\phi_1$ can be chosen strictly positive on $D$. For every point $x$ for which the transition density expansion is valid,
\begin{equation}\label{eq:survival-spectral-expansion}
S_x(t):=\P_x(\tau_D>t)
=\sum_{n=1}^{\infty}e^{-t\lambda_n(D)}\phi_n(x)\int_D\phi_n(y)\,\d y,
\end{equation}
and therefore
\begin{equation}\label{eq:survival-principal-tail}
\P_x(\tau_D>t)=C_D(x)e^{-\lambda_1(D)t}(1+o(1)),
\qquad C_D(x)>0.
\end{equation}
In particular,
\begin{equation}\label{eq:lambda1-log-survival-limit}
\lambda_1(D)
=-\lim_{t\to\infty}\frac1t\log \P_x(\tau_D>t).
\end{equation}
\end{pro}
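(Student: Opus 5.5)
The plan is to use the heat-kernel properties assumed from \cite{ChenHuZhao2025}, namely that the killed process has a Dirichlet transition density $p_D(t,x,y)$ that is jointly continuous, symmetric in $x,y$, strictly positive on $D\times D$, and bounded by the free density $p(t,x,y)$. From these I would derive the operator properties of $T_t^D$, then apply the standard spectral theorem for compact self-adjoint semigroups together with Jentzsch/Krein--Rutman to get the ground state. The survival expansion then follows by integrating the kernel expansion against $\mathbf 1_D$.

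\textbf{Step 1: operator properties.} Since $\Psi(\xi)=\sum_i|\xi_i|^\alpha$ and $e^{-t\Psi}\in L^1(\R^d)$, the free density satisfies
\[
p(t,x,y)\le p(t,0,0)=(2\pi)^{-d}\int e^{-t\Psi(\xi)}\,\d\xi=c\,t^{-d/\alpha}.
\]
Because $D$ is bounded, this gives $\int_D\int_D p_D(t,x,y)^2\,\d x\,\d y\le c^2t^{-2d/\alpha}|D|^2<\infty$, so $T_t^D$ is Hilbert--Schmidt and hence compact. Symmetry of $p_D$ (the process is symmetric, so $X$ and $-X$ give time reversal for the killed process; alternatively use that $T_t^D$ is the semigroup of the closed symmetric form \eqref{eq:rectilinear-dirichlet-form} on $\mathcal F_D$) makes $T_t^D$ self-adjoint. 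Strict positivity of $p_D$ makes it positivity improving.

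\textbf{Step 2: spectrum and ground state.} For a strongly continuous semigroup of compact self-adjoint contractions, the spectral theorem gives an orthonormal basis $\{\phi_n\}$ with $T_t^D\phi_n=e^{-t\lambda_n}\phi_n$. Here $\lambda_n$ are the eigenvalues of the generator $L^\alpha_D$ of $\mathcal E_D$, and they tend to $\infty$ because of compactness. This gives \eqref{eq:discrete-dirichlet-spectrum} and \eqref{eq:killed-semigroup-eigen-exp}.

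Two further facts are needed.
\begin{itemize}
\item \emph{Positivity of $\lambda_1$.} I would use $\|T_t^D\|<1$ for $t>0$. This follows from $\sup_x\P_x(\tau_D>t)<1$, which holds since from any point of bounded $D$ the first coordinate jumps out of the projection of $D$ with probability bounded below.
\item \emph{Simplicity and positivity of $\phi_1$.} By the Rayleigh--Ritz characterization, $\lambda_1=\min$ spectrum. The Perron--Frobenius/Jentzsch theorem for positivity improving compact operators then gives that the top eigenvalue $e^{-t\lambda_1}$ of $T_t^D$ is simple with an eigenfunction $\phi_1>0$ a.e., which is strict because $\phi_1=e^{t\lambda_1}T_t^D\phi_1$. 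In particular $\lambda_1<\lambda_2$.
\end{itemize}

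\textbf{Step 3: survival tail.} Mercer-type expansion gives $p_D(t,x,y)=\sum_n e^{-t\lambda_n}\phi_n(x)\phi_n(y)$. Each $\phi_n$ is continuous and bounded, since $|\phi_n|\le e^{t\lambda_n}\|p_D(t,\cdot,\cdot)\|_{L^2}$-type bounds hold. The series converges absolutely and uniformly for $t\ge t_0$ by the trace bound $\sum_n e^{-t\lambda_n}=\int_D p_D(t,x,x)\,\d x<\infty$. Integrating in $y$ over $D$ gives \eqref{eq:survival-spectral-expansion}.

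The tail estimate comes from isolating the first term. Write
\[
e^{t\lambda_1}S_x(t)=\phi_1(x)\int_D\phi_1+R(t),
\]
and bound the remainder via Cauchy--Schwarz:
\[
|R(t)|\le e^{-(t-t_0)(\lambda_2-\lambda_1)}\Big(\sum_n e^{-t_0\lambda_n}\phi_n(x)^2\Big)^{1/2}\Big(\sum_n e^{-t_0\lambda_n}\big(\textstyle\int_D\phi_n\big)^2\Big)^{1/2}.
\]
The first factor equals $p_D(t_0,x,x)^{1/2}$ and the second is at most $(e^{-t_0\lambda_1}|D|)^{1/2}$ by Bessel. So $R(t)\to0$, and $C_D(x)=\phi_1(x)\int_D\phi_1>0$, which gives \eqref{eq:survival-principal-tail}. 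Taking logarithms yields \eqref{eq:lambda1-log-survival-limit}.

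\textbf{Main obstacle.} I expect the hardest part to be the pointwise validity of the kernel expansion and the strict positivity at the specific point $x$. This is exactly why the statement restricts to points "for which the transition density expansion is valid" and to the geometric class of \cite{ChenHuZhao2025}. Rectilinear jumps move only along axes, so irreducibility can fail for nonconvex or thin domains. I would therefore lean entirely on the cited positivity and continuity of $p_D$ rather than prove it.
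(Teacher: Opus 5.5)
Your proposal is correct and follows essentially the same route as the paper: Hilbert--Schmidt compactness from the free-density bound, the spectral theorem together with Jentzsch/Krein--Rutman for the ground state, then expanding $\mathbf{1}_D$ in the eigenbasis and letting the $n=1$ term dominate. Your argument for $\lambda_1>0$ via $\sup_x\P_x(\tau_D>t)<1$ and your Cauchy--Schwarz remainder estimate supply steps the paper only asserts. In that estimate you dropped a harmless constant factor $e^{t_0\lambda_1}$, which comes from $e^{-t_0(\lambda_n-\lambda_1)}=e^{t_0\lambda_1}e^{-t_0\lambda_n}$ and does not affect $R(t)\to0$.
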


\begin{proof}
The transition density $p_D(t,x,y)$ and its strict positivity under the stated irreducibility assumptions are supplied by \cite{ChenHuZhao2025}. Thus
\begin{equation*}
T_t^D f(x)=\int_D p_D(t,x,y)f(y)\,\d y.
\end{equation*}
Let $p(t,x,y)$ denote the free transition density of the rectilinear stable process on the whole space $\R^d$, i.e. before imposing the killing at $\partial D$.  Because the coordinates are independent,
\begin{equation*}
p(t,x,y)=\prod_{i=1}^d p_t^{(1D)}(y_i-x_i),
\end{equation*}
where the one dimensional symmetric $\alpha$-stable density is
\begin{equation*}
p_t^{(1D)}(z)
=\frac{1}{2\pi}\int_{\R}e^{\i\xi z}e^{-t|\xi|^\alpha}\,\d\xi
=\frac{1}{\pi}\int_0^\infty e^{-t\xi^\alpha}\cos(\xi z)\,\d\xi.
\end{equation*}
Since $p_D(t,x,y)\le p(t,x,y)$, the stable scaling in \cite[Chapter 3]{Bertoin-1996} gives
\begin{equation*}
p(2t,x,x)\le C t^{-d/\alpha}.
\end{equation*}
Using symmetry and the semigroup property,
\begin{equation*}
\iint_{D\times D}p_D(t,x,y)^2\,\d x\,\d y
=\int_D p_D(2t,x,x)\,\d x
\le |D|Ct^{-d/\alpha}<\infty.
\end{equation*}
Thus $T_t^D$ is Hilbert--Schmidt, hence compact on $L^2(D)$. The spectral theorem for compact self-adjoint operators \cite[Chapter VI]{Riesz-SzNagy-1955} gives \eqref{eq:discrete-dirichlet-spectrum}--\eqref{eq:killed-semigroup-eigen-exp}. Because $T_t^D$ is positivity improving, the Jentzsch--Krein--Rutman theorem for compact positive operators \cite[Chapter V]{Schaefer1974} gives simplicity and positivity of the leading eigenfunction.

Finally,
\begin{equation*}
S_x(t)=T_t^D1(x)=\int_Dp_D(t,x,y)\,\d y.
\end{equation*}
Expanding $1$ in the eigenbasis gives \eqref{eq:survival-spectral-expansion}. Since $\phi_1>0$, the coefficient
\begin{equation*}
C_D(x):=\phi_1(x)\int_D\phi_1(y)\,\d y
\end{equation*}
is strictly positive. The term with $n=1$ dominates the expansion as $t\to\infty$, which yields \eqref{eq:survival-principal-tail} and the logarithmic limit \eqref{eq:lambda1-log-survival-limit}.
\end{proof}

The probabilistic meaning of \eqref{eq:lambda1-log-survival-limit} is that the principal eigenvalue is the exponential decay rate of the survival probability. This is exactly the quantity that can be observed from simulated exit times; no eigenfunction needs to be computed.

\begin{pro}[WoC estimator for the principal eigenvalue]\label{pro:woc-lambda1-estimator}
Let $Q=[-1,1]^d$, and suppose the precomputed jump pool contains independent samples of the unit-cube exit pair
\begin{equation*}
\left(Y_{\sigma_Q},\sigma_Q\right),
\qquad
\sigma_Q:=\inf\{t>0:Y_t\notin Q\},
\end{equation*}
where $Y_t$ is the rectilinear stable process started at the origin. If the current WoC position is $z\in D$ and
\begin{equation*}
r(z):=\operatorname{dist}_{\infty}(z,D^c),
\end{equation*}
then the stable self-similarity and the strong Markov property imply the exact local rescaling law
\begin{equation}\label{eq:woc-local-rescaling}
\left(X_{\tau_{z+rQ}},\tau_{z+rQ}\right)
\stackrel{d}{=}
\left(z+rY_{\sigma_Q},\,r^\alpha\sigma_Q\right),
\qquad X_0=z.
\end{equation}
Here $\tau_{z+rQ}:=\inf\{t>0:X_t\notin z+rQ\}$ is the first exit time from the local cube.
Consequently, a WoC path has accumulated time
\begin{equation*}
\tau_{\epsilon}
=\sum_{j=0}^{N_\epsilon-1} r_j^\alpha\sigma_Q^{(j)},
\end{equation*}
where $r_j=r(Z_j)$ and the path stops when $r_j\le\epsilon$ or when the sampled exit point leaves $D$. If the unit-cube exit law is sampled exactly and $\epsilon\downarrow0$, then $\tau_{\epsilon}$ converges in distribution to $\tau_D$. Hence independent WoC paths give the empirical survival curve
\begin{equation*}
\widehat S_{x,N}^{(\epsilon)}(t)
=\frac1N\sum_{k=1}^N
\mathbf{1}_{\{\tau_{\epsilon}^{(k)}>t\}},
\end{equation*}
and, for each fixed fitting grid, the strong law of large numbers gives
\begin{equation*}
\widehat S_{x,N}^{(\epsilon)}(t)
\xrightarrow[N\to\infty]{\mathrm{a.s.}}
\P_x(\tau_{\epsilon}>t)
\xrightarrow[\epsilon\downarrow0]{}
\P_x(\tau_D>t).
\end{equation*}
Combining this with \eqref{eq:lambda1-log-survival-limit}, the slope of the late-time log survival curve estimates $\lambda_1(D)$:
\begin{equation*}
\log \widehat S_{x,N}^{(\epsilon)}(t)
\approx -\lambda_1(D)t+b_x.
\end{equation*}
\end{pro}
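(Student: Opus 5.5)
The proof splits into four parts: the local rescaling law \eqref{eq:woc-local-rescaling}, the time-additivity of the WoC path, the convergence $\tau_\epsilon\to\tau_D$ in distribution, and finally the law of large numbers combined with Proposition~\ref{pro:lambda1-survival-tail}.

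\emph{Local rescaling.} Let $X_0=z$ and set $Y^{(r)}_s:=r^{-1}(X_{r^\alpha s}-z)$. The characteristic exponent $\Psi(\xi)=\sum_i|\xi_i|^\alpha$ satisfies $\Psi(\xi/r)\,r^\alpha=\Psi(\xi)$. Combined with spatial homogeneity, this gives that $Y^{(r)}$ is again a rectilinear stable process started at $0$; independence of the coordinates is preserved, since each coordinate is scaled separately. Under this map we have $X_t\in z+rQ$ if and only if $Y^{(r)}_{t/r^\alpha}\in Q$. Hence $\tau_{z+rQ}=r^\alpha\sigma_Q(Y^{(r)})$ and $X_{\tau_{z+rQ}}=z+rY^{(r)}_{\sigma_Q}$. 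Because this is an identity of random variables, it yields equality in law of the pair, which is \eqref{eq:woc-local-rescaling}.

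\emph{Additivity along the path.} Define successive stopping times $T_0=0$ and $T_{j+1}=T_j+\tau_{Z_j+r_jQ}\circ\theta_{T_j}$, with $Z_j=X_{T_j}$. By the strong Markov property at $T_j$, the shifted process is a fresh rectilinear stable process from $Z_j$, independent of $\mathcal F_{T_j}$. Applying the rescaling step to it shows that the chain $(Z_j,T_j)$ has the same law as the WoC chain built from i.i.d.\ pool samples; this is where the exactness of the unit-cube sampling is used. Since $z+r(z)Q\subset \overline D$, we get $T_j\le\tau_D$, and $T_j=\tau_D$ exactly when the exit point lies outside $D$. The accumulated time $\tau_\epsilon$ is therefore equal in law to $T_{N_\epsilon}$, where $N_\epsilon$ is the first index with $r_j\le\epsilon$ or $Z_j\notin D$.

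\emph{Convergence $\tau_\epsilon\to\tau_D$.} This is the main obstacle. Coupling all $\epsilon$ on one path of $X$, the times $T_{N_\epsilon}$ increase as $\epsilon\downarrow0$ and are bounded by $\tau_D$. I would first prove that $N_\epsilon<\infty$ a.s.; Walk-on-Spheres type arguments give geometric accumulation of $r_j$ near $\partial D$ only in the continuous case, but here jumps actually help. I would then show that the limit $T_\infty:=\lim T_{N_\epsilon}$ equals $\tau_D$ a.s.

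For the second point, on the event that the walk stops by exiting $D$ the times agree exactly. Otherwise $Z_{N_\epsilon}$ lies within $L^\infty$-distance $\epsilon$ of $D^c$, and quasi-left-continuity of the L\'evy process gives $X_{T_\infty}=\lim Z_j\in\partial D$. It remains to argue $\tau_D=T_\infty$ there, i.e.\ that the process started on $\partial D$ leaves $\overline D$ immediately. This regularity of boundary points is exactly what the geometric class assumption of \cite{ChenHuZhao2025} should supply, for example via an exterior cone or cube condition for this process.

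Given a.s.\ monotone convergence, we obtain convergence in distribution. Moreover, $\P_x(\tau_\epsilon>t)\to\P_x(\tau_D>t)$ at every continuity point $t$, and by \eqref{eq:survival-spectral-expansion} every $t$ is a continuity point, since $S_x$ is continuous.

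\emph{Law of large numbers and slope.} For fixed $\epsilon$ and $t$, the indicators $\mathbf 1_{\{\tau_\epsilon^{(k)}>t\}}$ are i.i.d.\ and bounded, so the strong law gives the first limit, uniformly over a finite fitting grid. The second limit is the previous step. Finally, taking logarithms in \eqref{eq:survival-principal-tail} gives
\[
\log S_x(t)=-\lambda_1(D)t+\log C_D(x)+o(1).
\]
This yields the stated linear approximation with $b_x=\log C_D(x)$ and identifies the late-time slope with $-\lambda_1(D)$ via \eqref{eq:lambda1-log-survival-limit}.
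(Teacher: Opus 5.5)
Your proposal is correct and follows the route the paper itself indicates. The paper gives no separate proof, only the inline chain: self-similarity plus the strong Markov property for \eqref{eq:woc-local-rescaling}, exact pool sampling with $\epsilon\downarrow0$ for $\tau_\epsilon\to\tau_D$, the strong law, then \eqref{eq:lambda1-log-survival-limit}. Your coupling of all $\epsilon$ on one path, together with quasi-left-continuity, supplies the detail the paper omits.

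The two points you leave open are easier than you suggest:
\begin{itemize}
\item $N_\epsilon<\infty$ a.s.\ follows from the same quasi-left-continuity argument. On $\{N_\epsilon=\infty\}$ you would have $|Z_{j+1}-Z_j|_\infty\ge r_j>\epsilon$ for all $j$, which is incompatible with convergence of $Z_j=X_{T_j}$ along the increasing stopping times $T_j\le\tau_D<\infty$.
\item No boundary regularity is needed. Since $D$ is open, $X_{T_\infty}\in\partial D$ already means $X_{T_\infty}\notin D$, hence $\tau_D\le T_\infty$ directly. Monotone convergence then gives $\P_x(\tau_\epsilon>t)\uparrow\P_x(\tau_D>t)$ for every $t$, so you also do not need continuity of $S_x$.
\end{itemize}
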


The rescaling \eqref{eq:woc-local-rescaling} is the reason WoC is appropriate here. The local domain used by the algorithm is an $L_\infty$ cube, and the rectilinear process has independent coordinate jumps and the scaling $X_{ct}\stackrel{d}{=}c^{1/\alpha}X_t$; see the stable process references \cite{Bertoin-1996,Sato-2013}. The construction is the cube analogue of the walk-on-spheres principle of Muller \cite{Muller-1956}, and it plays the same role for fractional exit problems as the isotropic fractional walk-on-spheres algorithm in \cite{Kyprianou-Osojnik-Shardlow-2018}.

For implementation, choose late-time grid points $t_1<\cdots<t_m$ with nonzero empirical survival counts, and compute
\begin{equation*}
(\widehat\lambda_1,\widehat b_x)
=\arg\min_{\ell,b}
\sum_{q=1}^m
\left(\log\widehat S_{x,N}^{(\epsilon)}(t_q)+\ell t_q-b\right)^2.
\end{equation*}
The first component of the minimizer is the estimate $\widehat\lambda_1$. The fitting window should be late enough that the first eigenmode dominates in \eqref{eq:survival-spectral-expansion}, but not so late that $\widehat S$ is dominated by a small number of surviving samples.

Finally, this eigenvalue estimate is not merely diagnostic. If $\lambda<0$ and $\kappa=-\lambda>0$, the Helmholtz Feynman--Kac weight is $\e^{\kappa\tau_D}$. From the tail identity
\begin{equation*}
\E_x\left[\e^{a\tau_D}\right]
=1+a\int_0^\infty e^{at}\P_x(\tau_D>t)\,\d t,\qquad a>0,
\end{equation*}
together with \eqref{eq:survival-principal-tail}, one obtains
\begin{equation*}
\E_x[e^{a\tau_D}]<\infty
\quad\Longleftrightarrow\quad
a<\lambda_1(D).
\end{equation*}
This is the spectral form of the gaugeability threshold for Schr\"odinger-type perturbations of Markov processes \cite{bogdan2000potential,chen2002gaugeability}. Thus the expectation in the Helmholtz representation is finite when
\begin{equation*}
\kappa=-\lambda<\lambda_1(D),
\end{equation*}
and, for bounded boundary data, the Monte Carlo variance is finite under the stronger $L^2$ condition
\begin{equation*}
2\kappa=2|\lambda|<\lambda_1(D).
\end{equation*}
This is why the WoC survival profiler can be used before the Helmholtz solve: it estimates the largest safe exponential growth rate allowed by the domain.
%%%%%%%%%%%%%%%%%%%%%%%%%%%%%%%%%%%%%%%%%%%%%%%%%%%%%%%%%%%%%%%%%%%%%%%%%%%%%%%
%%%%%%%%%%%%%%%%%%%%%%%%%%%%%%%%%%%%%%%%%%%%%%%%%%%%%%%%%%%%%%%%%%%%%%%%%%%%%%%
\section{Walk-on-Cubes}

Based on the regime analysis in Section 3, we formulate a unified Walk-on-Cubes (WoC) Monte Carlo solver for the Dirichlet problem $A_{\x}^{\alpha}u-\lambda u=0$ on $D\subset\R^d$. The implementation uses a common cube-exit kernel together with regime-dependent weighting: for $\lambda=0$, coordinatewise $\alpha$-stable pure-jump scaling determines the spatial transport; for $\lambda>0$, a Duffin extension adds an auxiliary coordinate $w$ and the boundary payoff includes the oscillatory factor; for $\lambda<0$, a Feynman--Kac time enhancement accumulates survival time through $\Delta\tau=r^{\alpha}\tau_{\mathrm{std}}$ and applies the exponential reweighting $\exp(-\lambda\tau_{\mathrm{total}})$ under the subcritical gauge constraint. In this way, the Laplace, Yukawa, and Helmholtz cases are treated within a single implementation framework that is well suited to parallel computation on modern GPUs.
\subsection{Walk-on-Cubes Algorithm}

We consider the numerical solution of the nonisotropic fractional-order partial differential equation with a potential term:
\begin{equation}
    A_{\x}^\alpha u(\x) - \lambda u(\x) = 0, \quad \x \in D \subset \R^d.
\end{equation}
subject to the Dirichlet exterior boundary condition $u(\x) = g(\x)$ for $\x \in D^c$. 
Here, $\alpha \in (0, 2)$, and $A_{\x}^\alpha$ is the nonisotropic fractional Laplacian, defined as the sum of one-dimensional fractional operators:
\begin{equation}
    A_{\x}^\alpha = \sum_{i=1}^d -(-\Delta_{x_i})^{\alpha/2}.
\end{equation}

\subsection{Mathematical Formulation of the Algorithm  }

To implement this problem efficiently on a parallel GPU architecture, we separate the regime-dependent mathematical input from the online path update used in the WoC solver.
The role of the potential parameter $\lambda$ depends on the regime. For $\lambda>0$, the Duffin extension introduces an auxiliary coordinate $w$ and the lifted field satisfies
\begin{equation}
    \mathcal{A}_{\x,w}^{\alpha}U(\x,w)=0,\quad (\x,w)\in D\times\R,
\end{equation}
where $\mathcal{A}_{\x,w}^{\alpha}:=A_{\x}^{\alpha}-(-\Delta_w)^{\alpha/2}$ and $U(\x,w)=u(\x)f_\lambda(w)$. For $\lambda<0$, no spatial lifting is used; instead, the solver accumulates survival time and applies the exponential reweighting $\exp(-\lambda\tau_{\mathrm{total}})$. For $\lambda=0$, neither correction is needed.
The main implementation issue is that a direct simulation of fractional microsteps inside each CUDA thread produces severe warp divergence: the number of microsteps required before leaving a local cube is random and can vary substantially from one thread to another. To reduce this cost, we move this variable length part of the computation to an offline preprocessing stage and precompute a \textbf{Jump Pool} $\mathcal{P}$ of standardized exits from the unit $L_\infty$ cube $Q=[-1,1]^d$. The online solver then uses only a distance query, a pool lookup, and a local rescaling at each WoC update.
The infinitesimal generator of the nonisotropic operator $A_{\x}^\alpha$ corresponds to a $d$-dimensional L\'evy process with \textbf{independent} symmetric $\alpha$-stable components. The standard exit vectors are sampled empirically by simulating discrete paths from the origin. By the self-similarity of the $\alpha$-stable process ($\X_{ct} \stackrel{d}{=} c^{1/\alpha} \X_t$), the position update at step $n$ for a time step $\Delta t$ is given by
\begin{equation}
    \X^{(n+1)} = \X^{(n)} + (\Delta t)^{1/\alpha} \bxi^{(n)},
\end{equation}
where $\bxi^{(n)} = (\xi_1^{(n)}, \dots, \xi_d^{(n)})$ consists of \textbf{independent} standard symmetric $\alpha$-stable random variables, in agreement with the nonisotropic sum formulation.

To generate each component $\xi_i$, we use the \textbf{Chambers-Mallows-Stuck (CMS) method} \cite{Chambers-Mallows-Stuck-1976}. We sample a uniform angle $V$ and an independent standard exponential variable $W$:
\begin{equation}
    V \sim \mathcal{U}\left(-\frac{\pi}{2}, \frac{\pi}{2}\right), \quad W \sim \text{Exp}(1).
\end{equation}
The exact $\alpha$-stable increment is computed by
\begin{equation}
    \xi_i = \frac{\sin(\alpha V)}{(\cos V)^{1/\alpha}} \left( \frac{\cos(V - \alpha V)}{W} \right)^{\frac{1-\alpha}{\alpha}}.
\end{equation}
A path halts at step $N$ when it breaches the boundary ($\X^{(N)} \notin Q$). This terminal state yields one standard space--time sample $(\Delta \X_{\mathrm{std}},\tau_{\mathrm{std}})$, and the resulting pool is
\begin{equation}
    \mathcal{P} = \left\{ \left(\Delta \X_{\mathrm{std}}^{(k)},\tau_{\mathrm{std}}^{(k)}\right) \right\}_{k=1}^{N_{\mathrm{pool}}}.
\end{equation}
After the jump pool has been constructed, the physical domain $D$ is discretized on a uniform grid, and independent \texttt{curandState} objects are assigned to the grid points so that the random number streams are uncorrelated. The online WoC solver is summarized in Algorithm~\ref{alg:unified-woc}. In the algorithm, $\Z^{(n)}$ denotes the full path state; its spatial component is $\x^{(n)}$. Thus $\Z^{(n)}=(\x^{(n)},w^{(n)})$ in the Yukawa regime $\lambda>0$, while $\Z^{(n)}=\x^{(n)}$ in the Laplace and Helmholtz regimes.
\begin{algorithm}[H]
\caption{A unified GPU-optimized Walk-on-Cubes scheme}
\label{alg:unified-woc}
\begin{algorithmic}[1]
\Require Domain $D$, boundary data $g$, parameter $\lambda$, jump pool $\mathcal{P}$, tolerance $\epsilon$, number of shots $N_{shots}$
\ForAll{grid points $\x\in D$ in parallel}
    \For{$m=1,\dots,N_{shots}$}
        \State Initialize the path state $\Z^{(0)}$ at $\x$
        \If{$\lambda>0$}
            \State Augment the state with $w^{(0)}=0$
        \EndIf
        \If{$\lambda<0$}
            \State Initialize $\tau_{\mathrm{total}} \gets 0$
        \EndIf
        \While{the spatial component of $\Z^{(n)}$ lies in $D$}
            \State $r \gets \operatorname{dist}_{\infty}(\x^{(n)},\partial D)$
            \If{$r<\epsilon$}
                \State \textbf{break}
            \EndIf
            \State Sample $(\Delta \X_{\mathrm{std}},\tau_{\mathrm{std}})$ uniformly from $\mathcal{P}$
            \State Update the active spatial coordinates by $\x^{(n+1)} \gets \x^{(n)} + r\,\Delta \X_{\mathrm{std}}$
            \If{$\lambda<0$}
                \State $\tau_{\mathrm{total}} \gets \tau_{\mathrm{total}} + r^{\alpha}\tau_{\mathrm{std}}$
            \EndIf
            \If{the spatial component $\x^{(n+1)}$ lies in $D^c$}
                \State \textbf{break}
            \EndIf
        \EndWhile
        \State Evaluate the terminal payoff by \eqref{eq:woc-payoff}
    \EndFor
    \State Set $\hat u(\x)$ equal to the arithmetic mean of the $N_{shots}$ path payoffs
\EndFor
\end{algorithmic}
\end{algorithm}
The regime-dependent terminal payoff is
\begin{equation}\label{eq:woc-payoff}
\text{Payoff} =
\begin{cases}
    g(\x_{\mathrm{exit}}), & \lambda = 0,\\
    g(\x_{\mathrm{exit}}) \cdot f_\lambda(w_{\mathrm{exit}}), & \lambda > 0,\\
    g(\x_{\mathrm{exit}}) \cdot \exp\!\big(-\lambda\tau_{\mathrm{total}}\big), & \lambda < 0.
\end{cases}
\end{equation}
In the Helmholtz case, this uses the Feynman--Kac exponential weight, which is amplifying because $\lambda<0$, and it is valid in the subcritical gauge regime \cite{chen2002gaugeability} so that the relevant exponential moments remain finite.
\section{Numerical Considerations and Examples}
\subsection*{Variance Blow-up from Boundary Conditions}
Care is required when defining the exterior Dirichlet boundary condition $g(\x)$. If $g$ grows too rapidly with distance, the numerical solution may exhibit large irregular fluctuations. This is caused by the heavy-tailed nature of the fractional process. The domain exterior Poisson kernel decays according to a power law ($\propto |\x|^{-d-\alpha}$) \cite{bogdan1997boundary}, so particles still have a non-negligible probability of making very large jumps into the far exterior. Coupling these heavy-tailed exit locations with an unbounded rapidly growing $g(\x)$ can make the empirical variance of the Monte Carlo estimator infinite or extremely large, thereby destroying the smoothness of the computed expectation surface.
%\subsection{Examples}

To validate the robustness of the generalized solver, we first report two Yukawa Green function manufactured benchmarks, then a one-dimensional Helmholtz benchmark with an exact Fourier mode solution, and finally four additional test cases spanning multiply connected, non-convex, and curvilinear domains in both 2D and 3D. 

\subsection*{Green Function Manufactured Benchmark}
The Green function benchmarks use an exact solution generated from the full-space resolvent of the positive operator
\[
L^\alpha:=-A^\alpha=\sum_{i=1}^d(-\Delta_{x_i})^{\alpha/2}.
\]
For $\lambda>0$, using the Fourier multiplier characterization of the fractional Laplacian \cite[Proposition~3.3]{DiNezzaPalatucciValdinoci2012}, the full-space resolvent kernel is
\[
G_{\lambda,\alpha}(\x)
=\mathcal F^{-1}\!\left[
\frac{1}{\lambda+\sum_{i=1}^d|\xi_i|^\alpha}
\right](\x).
\]
Here $\mathcal F^{-1}$ denotes the inverse Fourier transform.
Equivalently, using the $q$-resolvent/potential operator representation for L\'evy semigroups \cite[Chapter~8, Section~41]{Sato-2013} and the rectilinear transition density product formula \cite[(1.8)]{ChenHuZhao2025}, if $p_t$ denotes the transition density of the rectilinear stable process generated by $A^\alpha$, then
\[
G_{\lambda,\alpha}(\x)=\int_0^\infty e^{-\lambda t}p_t(\x)\,\d t.
\]
For the broader anisotropic coordinate-sum setting, potential kernels and Green function estimates are treated by Bogdan and Sztonyk \cite[Sections~1--3]{Bogdan-Sztonyk-2007}.

By construction,
\[
(L^\alpha+\lambda)G_{\lambda,\alpha}=\delta_0
\quad\mbox{in }\mathcal D'(\R^d),
\]
because the Fourier transform of the left-hand side is
\[
\left(\lambda+\sum_{i=1}^d|\xi_i|^\alpha\right)
\frac{1}{\lambda+\sum_{i=1}^d|\xi_i|^\alpha}=1.
\]
Now place the pole at a point $\mathbf{a}\notin \overline D$ and set
\[
u_{\mathrm{exact}}(\x):=G_{\lambda,\alpha}(\x-\mathbf{a}).
\]
Then $(L^\alpha+\lambda)u_{\mathrm{exact}}=0$ in $D$, or equivalently
\[
A^\alpha u_{\mathrm{exact}}=\lambda u_{\mathrm{exact}}\quad\mbox{in }D.
\]
Thus this gives an exact Yukawa regime benchmark for \eqref{eq:fractional-dirichlet} with $\lambda>0$.

The nonlocal boundary condition is imposed on the whole exterior $D^c$, not only on $\partial D$; this is the standard exterior Dirichlet formulation for fractional Laplace problems \cite{Claus-Warma-2020,RosOton-Serra-2014}. Therefore we prescribe
\[
g(\x)=u_{\mathrm{exact}}(\x)=G_{\lambda,\alpha}(\x-\mathbf{a}),\qquad \x\in D^c.
\]

For one-dimensional tests the exact value can be evaluated by the Fourier integral
\[
G_{\lambda,\alpha}(\x-\mathbf{a})
=\frac1\pi\int_0^\infty
\frac{\cos(\xi(x_1-a_1))}{\lambda+\xi^\alpha}\,\d\xi,
\qquad \x=(x_1),\quad \mathbf{a}=(a_1).
\]
For the coordinate-sum operator in several dimensions, the time resolvent formula is convenient because the free transition density factorizes coordinatewise:
\[
p_t(\x)=\prod_{i=1}^d p_t^{(1D)}(x_i).
\]
Here the one-dimensional factor is the symmetric $\alpha$-stable transition density defined by the Fourier representation
\[
p_t^{(1D)}(z)
=\frac{1}{2\pi}\int_{\R} e^{i\xi z}e^{-t|\xi|^\alpha}\,\d\xi
=\frac{1}{\pi}\int_0^\infty e^{-t\xi^\alpha}\cos(\xi z)\,\d\xi.
\]
	In practice, the pole is kept a positive distance outside $D$ so that the singularity of $G_{\lambda,\alpha}$ is never sampled inside the computational domain. Moderate values such as $\alpha>1$ and $\lambda$ of order $10^{-1}$--$1$ keep the Green function well behaved and make the benchmark a clean test of the WoC implementation.

\subsubsection*{Case 11: One-Dimensional Yukawa Green Comparison}
The first Green function comparison case validates the positive Yukawa branch $\lambda>0$ for the one-dimensional operator
\[
A^\alpha=-(-\Delta)^{\alpha/2}.
\]
The computational domain is $D=(-1,1)$, with $\x=(x_1)$, and the benchmark equation is
\[
A^\alpha u=\lambda u,
\qquad\mbox{equivalently}\qquad
\big((-\Delta)^{\alpha/2}+\lambda\big)u=0
\quad\mbox{in }D.
\]
The exact solution is generated by a one-dimensional fractional Yukawa Green function with a pole placed outside the interval:
\[
\begin{aligned}
u_{\mathrm{exact}}(\x)
&=G_{\lambda,\alpha}(\x-\mathbf{a})
=G_{\lambda,\alpha}(x_1-a_1),\\
G_{\lambda,\alpha}(r)
&=\frac1\pi\int_0^\infty
\frac{\cos(\xi r)}{\lambda+\xi^\alpha}\,\d\xi,
\qquad \mathbf{a}=(a_1),\quad a_1=2.5.
\end{aligned}
\]
Since $\mathbf{a}\notin[-1,1]$, the singularity never enters the computational domain and the distributional resolvent identity gives
\[
\big((-\Delta_{x_1})^{\alpha/2}+\lambda\big)G_{\lambda,\alpha}(x_1-a_1)=0,
\qquad \x=(x_1)\in(-1,1).
\]
The exterior Dirichlet data are therefore fixed by the same full-space manufactured solution,
\[
g(\x)=G_{\lambda,\alpha}(x_1-2.5),
\qquad \x=(x_1)\in D^c.
\]

\begin{table}[H]
\centering
\begin{tabular}{ll}
\hline
Parameter & Value \\
\hline
Domain & $(-1,1)$ \\
Fractional order & $\alpha=1.5$ \\
Yukawa parameter & $\lambda=0.1$ \\
Auxiliary frequency & $\omega=\lambda^{1/\alpha}\approx0.215443$ \\
Pole location & $\mathbf{a}=(2.5)$ \\
Iteration shots & $1{,}000{,}000$ \\
Jump pool size & $1{,}000{,}000$ \\
Grid size & $64$ \\
Random seed & $20260505$ \\
Time step & $10^{-4}$ \\
Exit tolerance & $10^{-5}$ \\
Maximum steps & $20000$ \\
Quadrature cutoff and panels & $300.0,\ 6000$ \\
Green function table size and range & $2049,\ |r|\le40$ \\
\hline
\end{tabular}
\caption{Configuration for the one-dimensional Yukawa Green comparison case.}
\end{table}

\begin{figure}[H]
\centering
\includegraphics[width=0.86\textwidth]{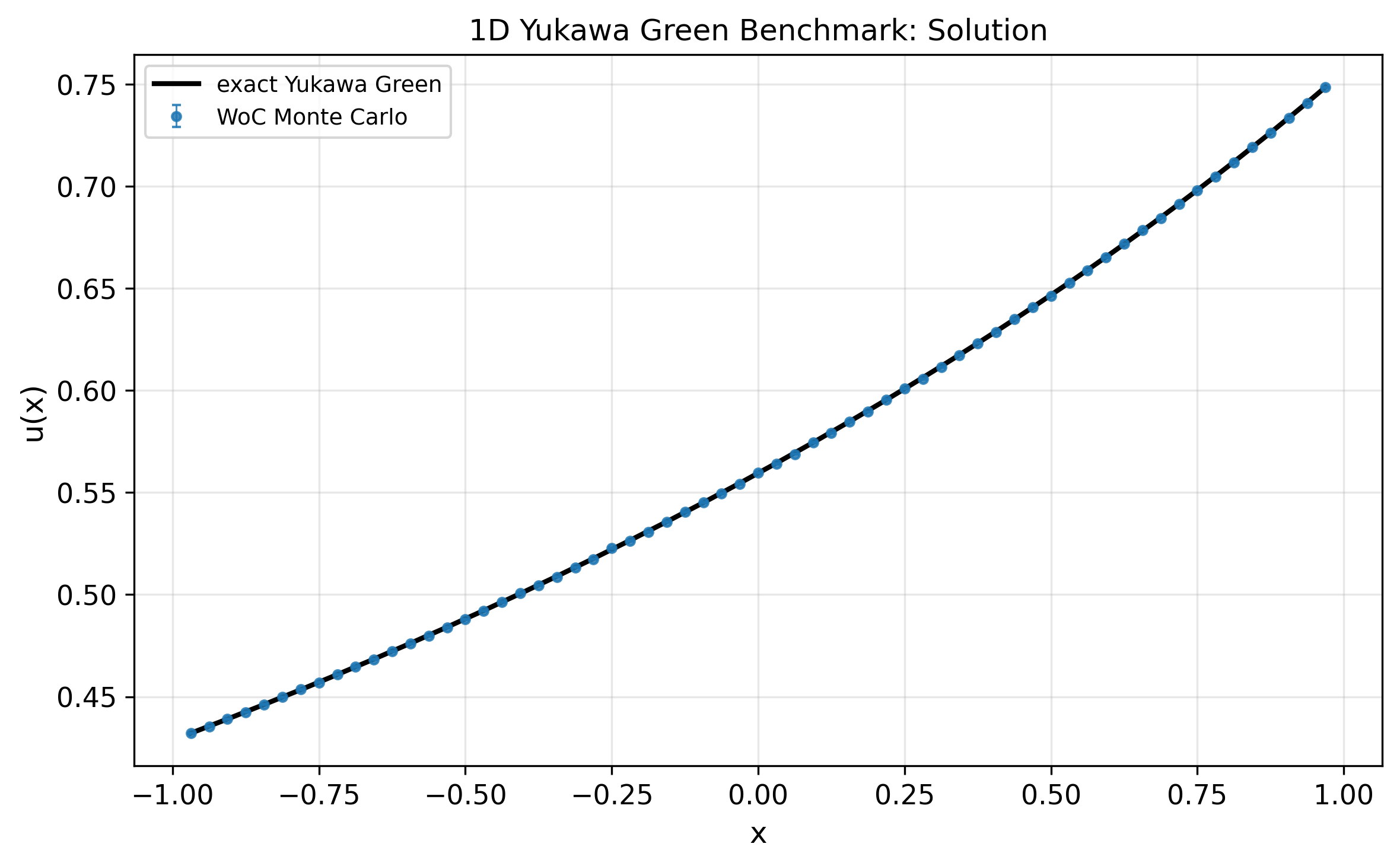}
\caption{One-dimensional Yukawa Green comparison. The WoC Monte Carlo estimate is plotted against the exact Green function solution.}
\label{fig:yukawa-green-1d-solution}
\end{figure}

\begin{figure}[H]
\centering
\includegraphics[width=0.86\textwidth]{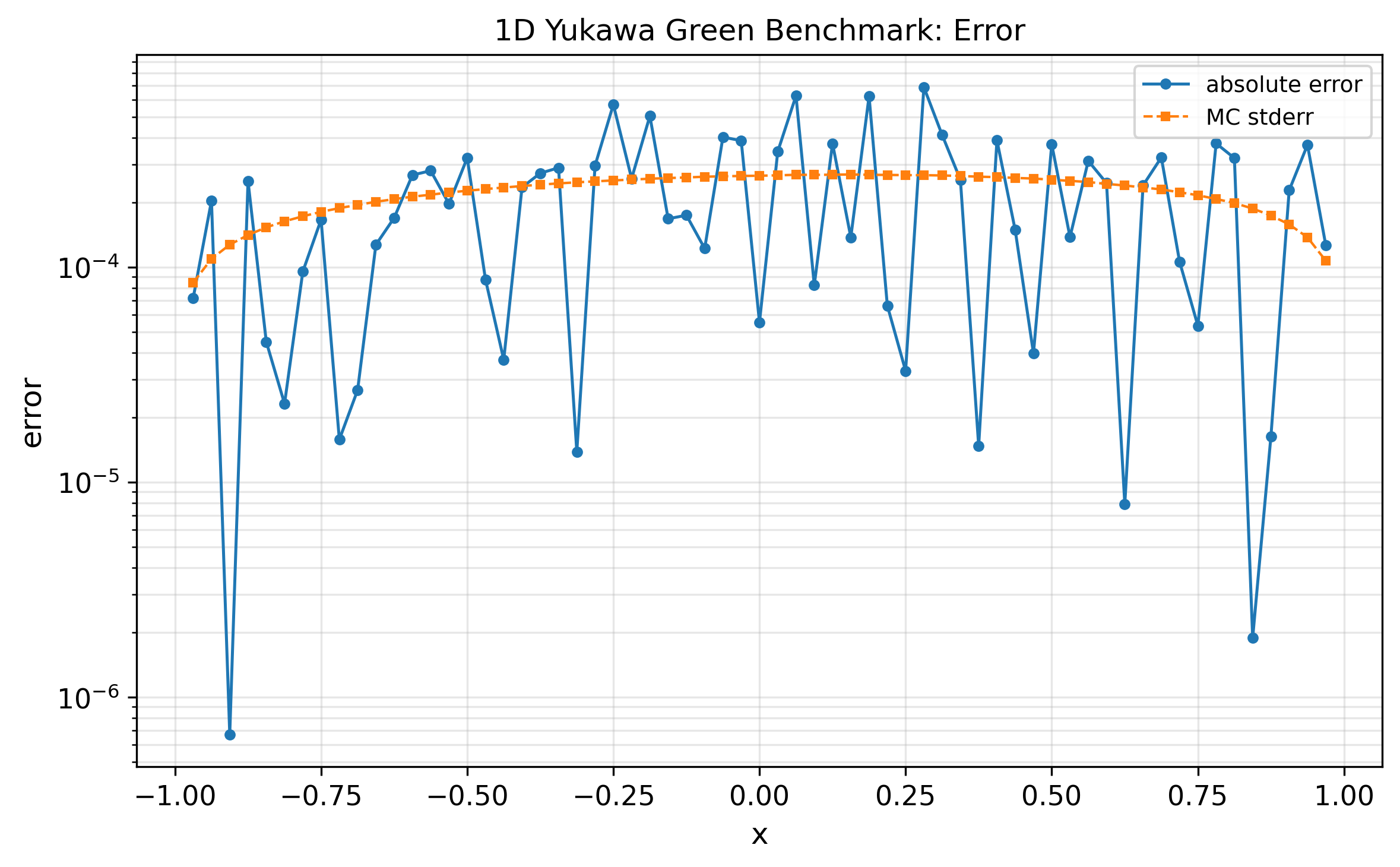}
\caption{Pointwise error for the one-dimensional Yukawa Green comparison. The absolute error is shown together with the Monte Carlo standard error.}
\label{fig:yukawa-green-1d-error}
\end{figure}

Figures~\ref{fig:yukawa-green-1d-solution}--\ref{fig:yukawa-green-1d-error} show that the Monte Carlo curve follows the analytic Yukawa Green profile, and the pointwise error remains on the same scale as the sampling uncertainty.

\subsubsection*{Case 12: Two-Dimensional Separable Yukawa Green Comparison}
The second Green function comparison checks the coordinate-sum operator
\[
A_{\x}^\alpha=-(-\Delta_{x_1})^{\alpha/2}-(-\Delta_{x_2})^{\alpha/2}
\]
on the square $D=(-1,1)^2$, where $\x=(x_1,x_2)$.  The equation is
\[
A_{\x}^\alpha u=\lambda u,
\qquad\mbox{equivalently}\qquad
\big((-\Delta_{x_1})^{\alpha/2}+(-\Delta_{x_2})^{\alpha/2}+\lambda\big)u=0
\quad\mbox{in }D.
\]
Here the manufactured solution is separable:
\[
u_{\mathrm{exact}}(\x)
=G_{\mu,\alpha}(x_1-a_1)\,G_{\nu,\alpha}(x_2-a_2),
\qquad
\mu+\nu=\lambda.
\]
In the reported run
\[
\lambda=0.1,\qquad \mu=\nu=0.05,\qquad
\mathbf{a}=(a_1,a_2)=(2.5,2.25).
\]
This construction is matched to the nonisotropic coordinate-sum operator.  It is not the radial full-space Green kernel of the isotropic fractional Laplacian; rather, it uses the one-dimensional resolvent identity in each coordinate. Since both poles lie outside the corresponding coordinate intervals,
\[
\begin{aligned}
(-\Delta_{x_1})^{\alpha/2}G_{\mu,\alpha}(x_1-a_1)=-\mu G_{\mu,\alpha}(x_1-a_1),
\\
(-\Delta_{x_2})^{\alpha/2}G_{\nu,\alpha}(x_2-a_2)=-\nu G_{\nu,\alpha}(x_2-a_2)
\end{aligned}
\]
for $\x\in D$. Therefore
\[
\left((-\Delta_{x_1})^{\alpha/2}+(-\Delta_{x_2})^{\alpha/2}\right)u_{\mathrm{exact}}
=-(\mu+\nu)u_{\mathrm{exact}}
=-\lambda u_{\mathrm{exact}},
\]
or equivalently $A_{\x}^\alpha u_{\mathrm{exact}}=\lambda u_{\mathrm{exact}}$ in $D$.  The exterior data are again prescribed on the whole complement:
\[
g(\x)=u_{\mathrm{exact}}(\x),
\qquad \x\in D^c.
\]

\begin{table}[H]
\centering
\begin{tabular}{ll}
\hline
Parameter & Value \\
\hline
Domain & $(-1,1)^2$ \\
Fractional order & $\alpha=1.5$ \\
Yukawa parameter & $\lambda=0.1$ \\
Separable parameters & $\mu=\nu=0.05$ \\
Auxiliary frequency & $\omega=\lambda^{1/\alpha}\approx0.215443$ \\
Pole location & $\mathbf{a}=(2.5,2.25)$ \\
Iteration shots & $1{,}000{,}000$ \\
Jump pool size & $1{,}000{,}000$ \\
Grid size & $64\times64$ \\
Random seed & $20260505$ \\
Time step & $10^{-4}$ \\
Exit tolerance & $10^{-5}$ \\
Maximum steps & $20000$ \\
Quadrature cutoff and panels & $300.0,\ 6000$ \\
Green function table size and range & $2049,\ |r|\le60$ \\
\hline
\end{tabular}
\caption{Configuration for the two-dimensional separable Yukawa Green comparison case.}
\end{table}

\begin{figure}[H]
\centering
\includegraphics[width=0.98\textwidth]{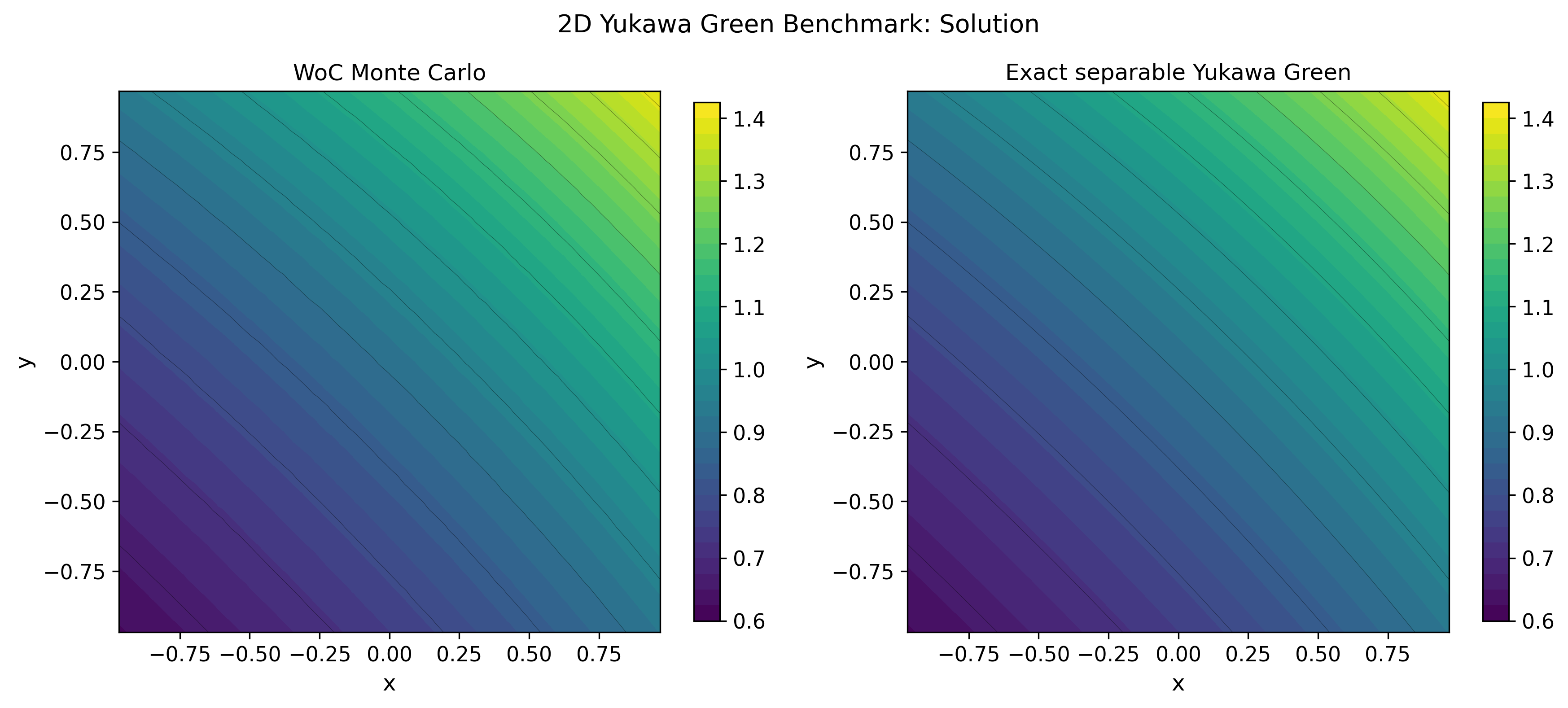}
\caption{Two-dimensional separable Yukawa Green comparison. The WoC Monte Carlo solution is shown beside the exact separable Green function solution.}
\label{fig:yukawa-green-2d-solution}
\end{figure}

\begin{figure}[H]
\centering
\includegraphics[width=0.98\textwidth]{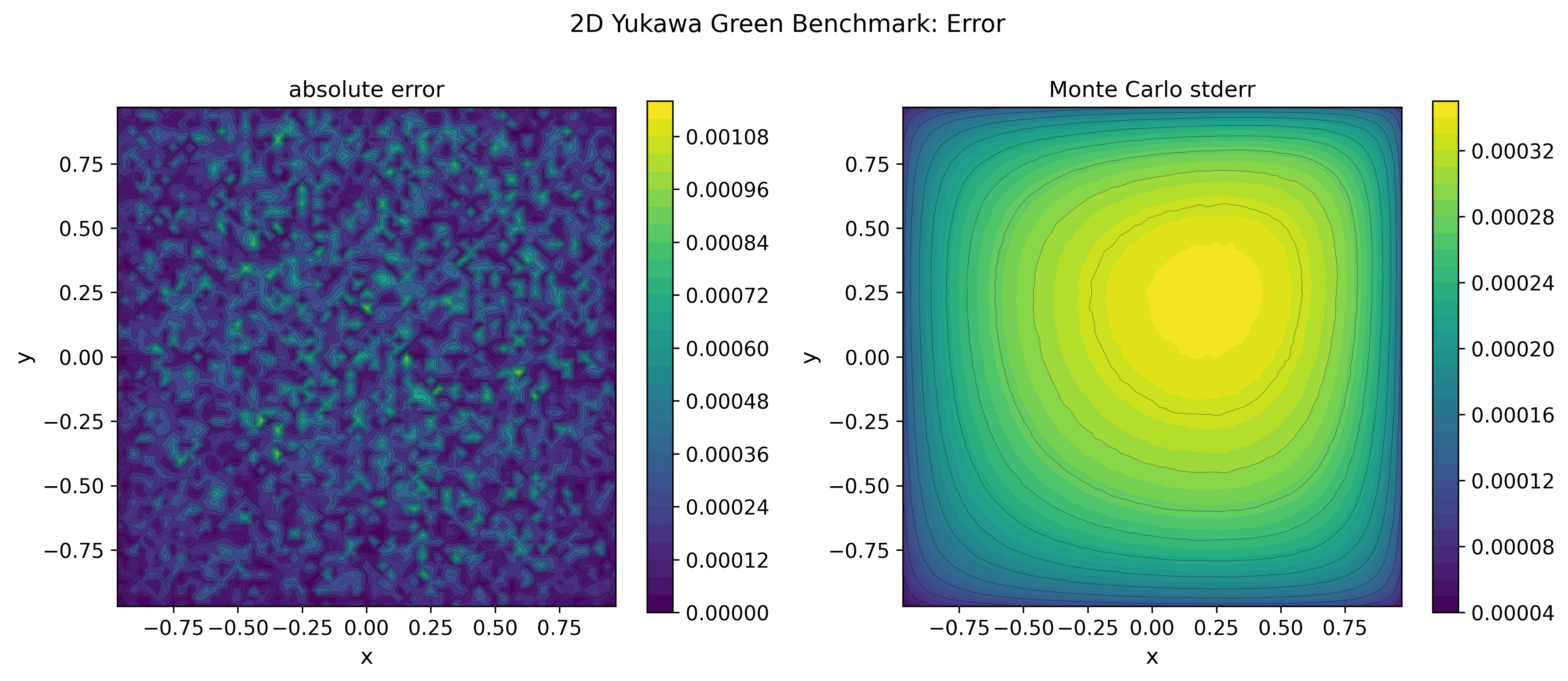}
\caption{Pointwise error for the two-dimensional separable Yukawa Green comparison. The absolute error map is shown beside the Monte Carlo standard error map.}
\label{fig:yukawa-green-2d-error}
\end{figure}

Figures~\ref{fig:yukawa-green-2d-solution}--\ref{fig:yukawa-green-2d-error} show the expected agreement between the WoC estimator and the separable manufactured solution.  The absolute error plot stays at the scale predicted by the Monte Carlo standard error, which supports the use of the Green function construction as a benchmark for the Yukawa regime of the nonisotropic operator.
\subsection{One-Dimensional Helmholtz Walk-on-Intervals Validation}
\label{sec:benchmark-1d-helmholtz}

We first validate the one-dimensional solver against the exact Fourier mode derived in \eqref{eq:report3-real-eigenspace}. For the generator
\begin{equation*}
A^\alpha=-(-\Delta)^{\alpha/2},
\end{equation*}
the manufactured exact solution
\begin{equation*}
u_{\mathrm{exact}}(\x)=\cos(kx_1),\qquad \x=(x_1),
\end{equation*}
satisfies
\begin{equation*}
A^\alpha u_{\mathrm{exact}}(\x)=-k^\alpha u_{\mathrm{exact}}(\x),
\end{equation*}
so the corresponding Helmholtz parameter is $\lambda=-k^\alpha<0$. The computational domain is $D=(-1,1)$, and the exterior Dirichlet data are taken from the exact solution:
\begin{equation*}
g(\x)=u_{\mathrm{exact}}(\x)=\cos(kx_1),\qquad \x\in D^c.
\end{equation*}

Before the production runs, the solver performs the same two stability checks used in the main Helmholtz code path. For the exterior data, the spatial $L^2$ criterion checks the polynomial growth of the exterior boundary data against the fractional exit tail: if $|g(x)|\le C(1+|x|)^p$ at infinity, then the one-dimensional condition $p<\alpha/2$ is sufficient for the sampled exterior contribution to have a finite second moment. The detected growth rate was
\begin{equation*}
p\approx0.401810<\alpha/2=0.750000,
\end{equation*}
so the spatial $L^2$ criterion passed. For the time gauge, the estimated principal eigenvalue and amplification rate were
\begin{equation*}
\lambda_1\approx 1.611357,\qquad -\lambda=0.353553,
\end{equation*}
which places the test in the subcritical regime $-\lambda<\lambda_1$.

\begin{table}[H]
\centering
\begin{tabular}{ll}
\hline
Parameter & Value \\
\hline
Dimension & $1$ \\
Domain & $[-1,1]$ \\
$L$ & $1.0$ \\
Boundary condition & $u(\x)=\cos(kx_1)$ \\
Exact solution & $u(\x)=\cos(kx_1)$ \\
Fractional order & $\alpha=1.5$ \\
Iteration shots & $1{,}000{,}000$ \\
Jump pool size & $50{,}000$ \\
Time step & $3\times 10^{-4}$ \\
$\epsilon$ & $10^{-5}$ \\
Maximum steps & $20{,}000$ \\
Grid size & $64$ \\
\hline
\end{tabular}
\caption{Common benchmark configuration for the two representative \texttt{helmholtz\_1m} runs.}
\label{tab:helmholtz-benchmark-config-main}
\end{table}

The lowest-ratio run corresponds to \texttt{000\_helmholtz\_1m\_solution\_d1}; its shared configuration is listed in Table~\ref{tab:helmholtz-benchmark-config-main}. The parameters are
\begin{equation*}
\frac{|\lambda|}{\lambda_1}=0.15,\qquad
\lambda=-0.2417036195774674,\qquad
k=0.38802118360634363,
\end{equation*}
with random seed \texttt{20269505}. The observed errors were
\begin{align*}
\|u_{\mathrm{MC}}-u_{\mathrm{exact}}\|_{\infty}=0.001297267461,
\\
\|u_{\mathrm{MC}}-u_{\mathrm{exact}}\|_{2}=0.0005009832629540645,
\end{align*}
with mean standard error $0.0003333879742063492$ and zero maximum step hits.

\begin{figure}[H]
\centering
\includegraphics[width=0.82\textwidth]{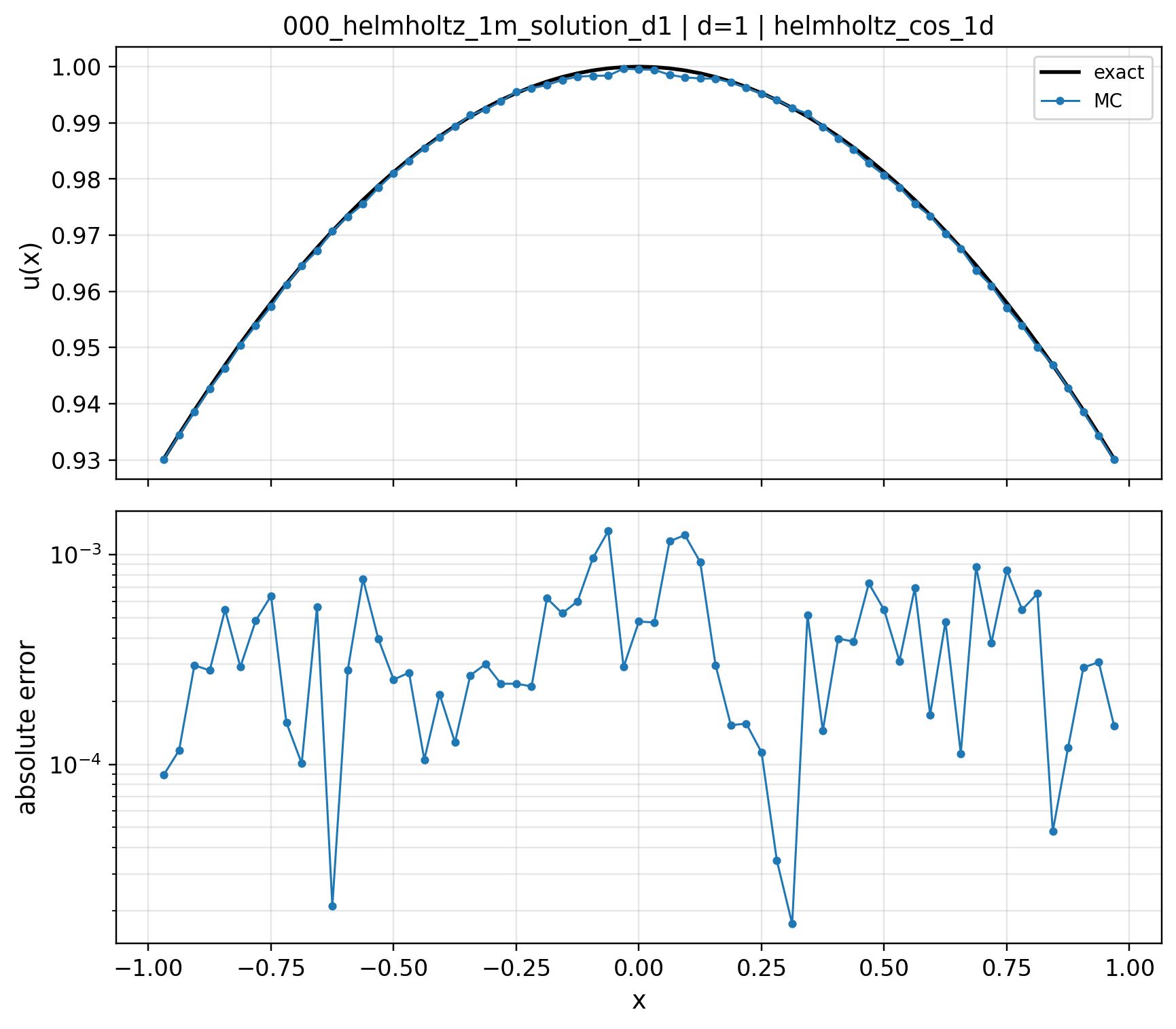}
\caption{Lowest-ratio benchmark run in the \texttt{helmholtz\_1m} profile, corresponding to \texttt{000\_helmholtz\_1m\_solution\_d1} with $|\lambda|/\lambda_1=0.15$.}
\label{fig:benchmark-helmholtz-ratio-015}
\end{figure}

The highest-ratio run corresponds to \texttt{003\_helmholtz\_1m\_solution\_d1}, with
\begin{equation*}
\frac{|\lambda|}{\lambda_1}=0.65,\qquad
\lambda=-1.0473823515023588,\qquad
k=1.0313438926461413,
\end{equation*}
and random seed \texttt{20269508}. Its errors were
\begin{align*}
\|u_{\mathrm{MC}}-u_{\mathrm{exact}}\|_{\infty}=0.052417647075,
\\
\|u_{\mathrm{MC}}-u_{\mathrm{exact}}\|_{2}=0.014328343996343449,
\end{align*}
with mean standard error $0.008218872176079367$ and zero maximum step hits.

\begin{figure}[H]
\centering
\includegraphics[width=0.82\textwidth]{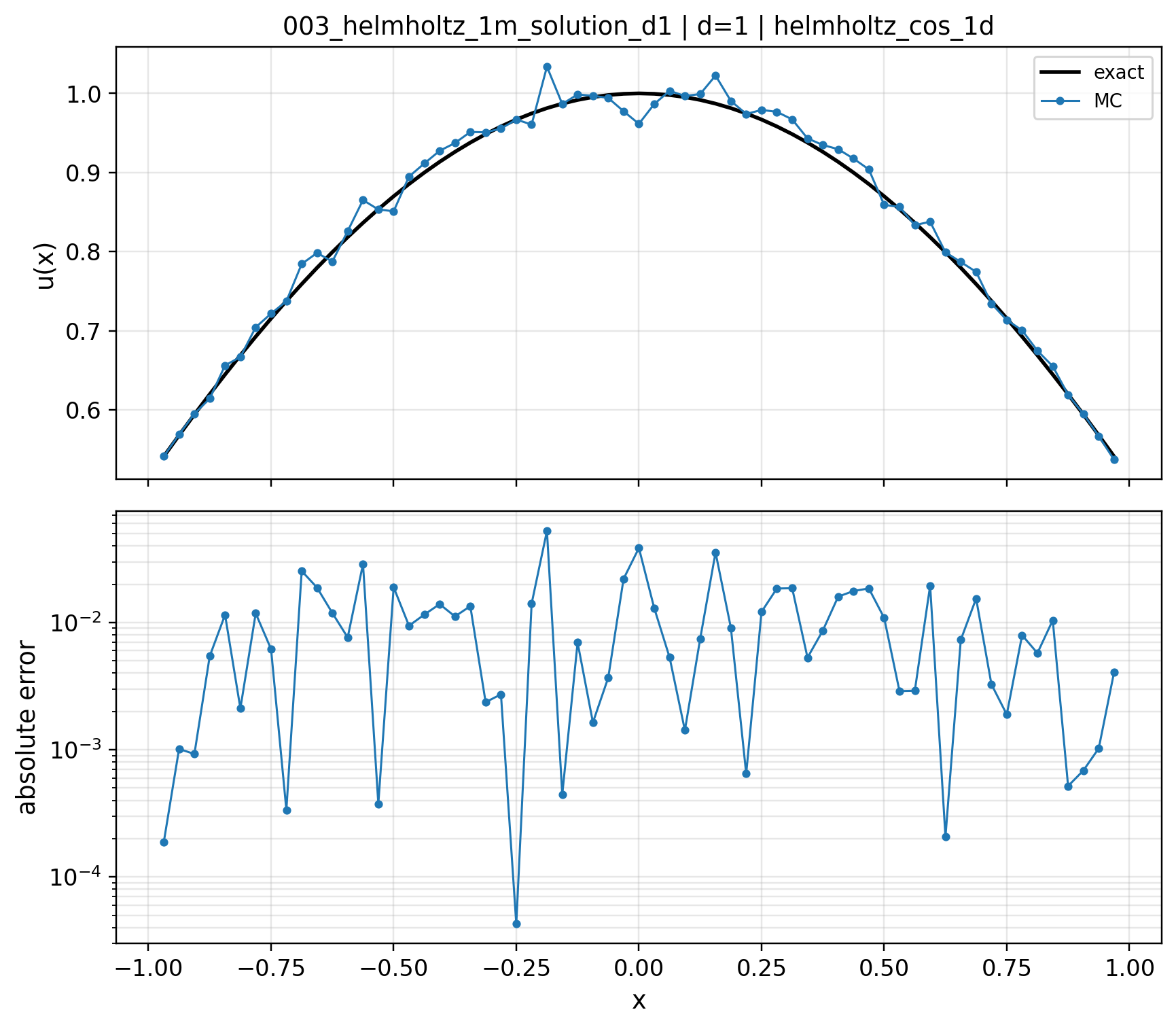}
\caption{Highest-ratio benchmark run in the \texttt{helmholtz\_1m} profile, corresponding to \texttt{003\_helmholtz\_1m\_solution\_d1} with $|\lambda|/\lambda_1=0.65$.}
\label{fig:benchmark-helmholtz-ratio-065}
\end{figure}

Comparing the two runs shows the expected deterioration as $|\lambda|/\lambda_1$ increases toward the critical threshold. The low-ratio case remains highly accurate, while the higher-ratio case exhibits visibly larger bias and variance, consistent with the amplifying Feynman--Kac weight in the Helmholtz regime.
\subsection*{Systematic Dataset Diagnostics}
To complement the individual benchmark plots, we also ran systematic experiments, denoted Experiments 1--3. These experiments cover Monte Carlo convergence, jump pool convergence, and principal eigenvalue survival tail validation.

\begin{table}[H]
\centering
\small
\begin{tabular}{@{}>{\raggedright\arraybackslash}p{0.10\textwidth}>{\raggedright\arraybackslash}p{0.34\textwidth}>{\raggedright\arraybackslash}p{0.48\textwidth}@{}}
\hline
Test & Quantity varied & Main numerical conclusion \\
\hline
Exp1 & $N_{\mathrm{shots}}=10^3,\ldots,10^6$ for the 1D Yukawa Green benchmark & Mean $L^2$ error decreases from $7.234846\times10^{-3}$ to $2.973546\times10^{-4}$; the fitted log--log slope is $-0.464887$, close to the expected Monte Carlo rate. \\
Exp2 & Jump pool size $N_{\mathrm{pool}}=10^3,\ldots,10^6$ for 1D Yukawa, 2D Yukawa, and 1D Helmholtz tests & The finest pool errors are $2.556641\times10^{-4}$, $2.708984\times10^{-4}$, and $1.237600\times10^{-3}$, respectively, showing stabilization once the pool is sufficiently large. \\
Exp3 & Survival tail regression for intervals, rectangles, and hypercubes & The median relative error in $\lambda_1$ is $1.434713\times10^{-2}$, the maximum relative error is $5.962481\times10^{-2}$, and the median tail regression $R^2$ is $0.999956$. \\
\hline
\end{tabular}
\caption{Summary of the trusted systematic dataset diagnostics from Experiments 1--3.}
\label{tab:trusted-systematic-diagnostics}
\end{table}

The Monte Carlo convergence test in Figure~\ref{fig:systematic-exp1-exp2} supports the expected square-root sampling trend.  The fitted slope is slightly shallower than $-1/2$, which is consistent with finite-grid effects, tabulated Green function error, and jump pool discretization being present together with sampling noise.  The jump pool experiment in the same figure shows that very small pools introduce a visible additional error, especially in the Green function tests, while larger pools bring the error down to the sampling level.

\begin{figure}[H]
\centering
\begin{minipage}{0.49\textwidth}
\centering
\includegraphics[width=\textwidth]{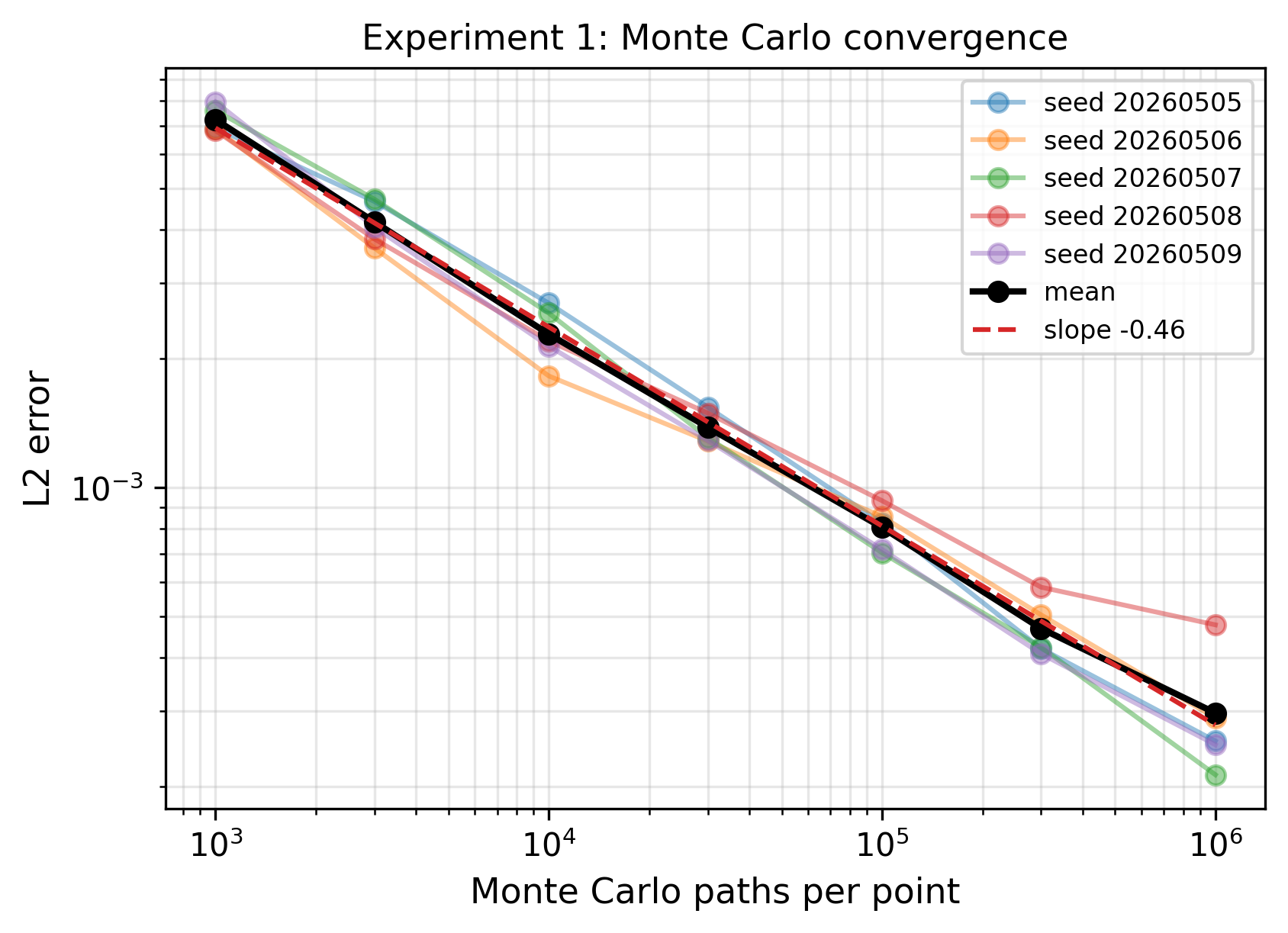}
\end{minipage}
\hfill
\begin{minipage}{0.49\textwidth}
\centering
\includegraphics[width=\textwidth]{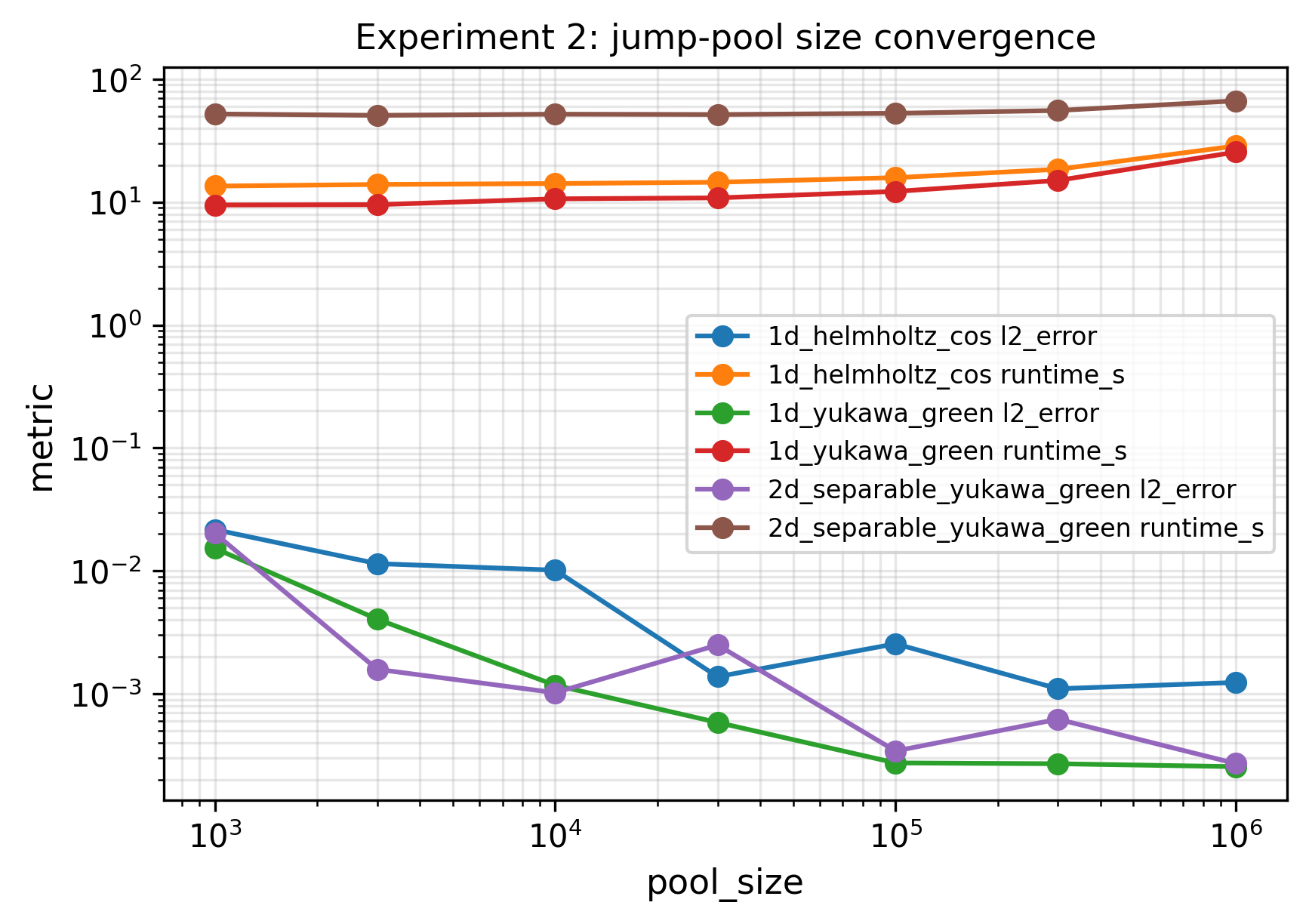}
\end{minipage}
\caption{Trusted dataset diagnostics for Monte Carlo convergence and jump pool convergence.}
\label{fig:systematic-exp1-exp2}
\end{figure}

The eigenvalue survival tail validation in Figure~\ref{fig:systematic-exp3} checks the numerical mechanism used to estimate the principal threshold in the Helmholtz regime.  The tests include interval scaling in one dimension, additivity on product rectangles, and dimension scaling on hypercubes.  The high tail regression $R^2$ values and small relative errors indicate that the survival profiler gives a reliable estimate of the exponential decay rate controlling the Feynman--Kac admissibility threshold.

\begin{figure}[H]
\centering
\includegraphics[width=1.0\textwidth]{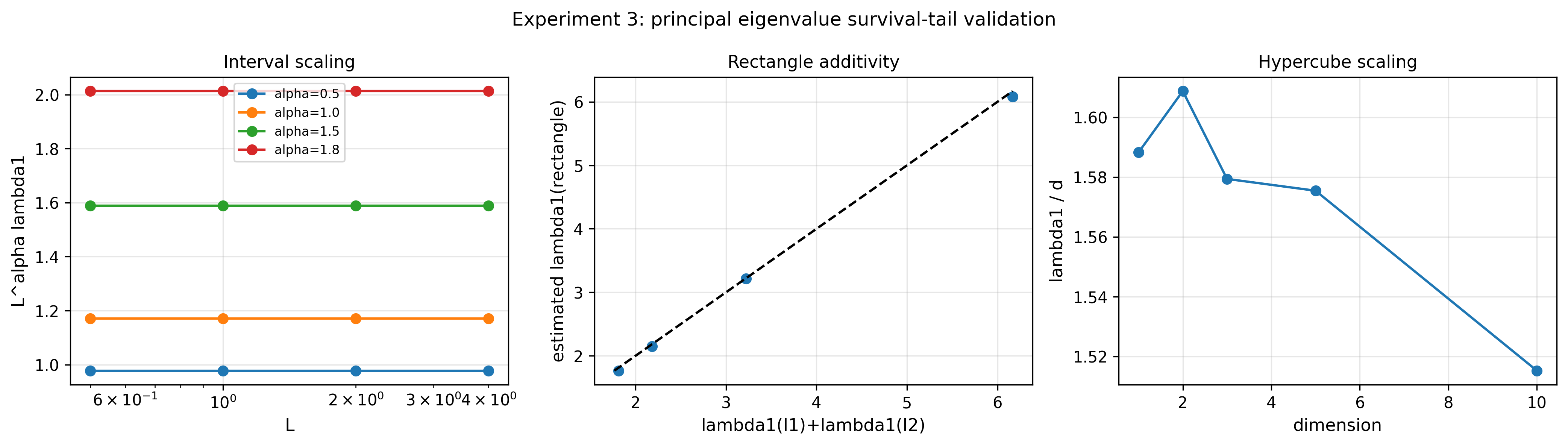}
\caption{Trusted dataset diagnostics for principal eigenvalue estimation from survival tail regression.}
\label{fig:systematic-exp3}
\end{figure}

For the following four Laplace test cases, the common numerical parameters are
\[
\begin{aligned}
\alpha&=1.5, & \lambda&=0, &
N_{\mathrm{shots}}&=10^8\ \text{per grid point},\\
\Delta t_{\mathrm{micro}}&=10^{-4}, &
\epsilon&=10^{-5}.
\end{aligned}
\]

\subsection*{Case 1: 2D Doubly Connected Domain}
This case investigates a non-trivial topological domain containing an internal obstacle.
\begin{itemize}
    \item \textbf{Domain:} $D = [-1, 1]^2 \setminus [-0.5, 0.5]^2$ (A square with a square hole).
    \item \textbf{Boundary Condition:} $g(\x) = \sin(\pi x_1) \cos(\pi x_2)$ for $\x=(x_1,x_2) \in D^c$.
\end{itemize}

\begin{figure}[H]
  \centering
  \includegraphics[width=0.8\textwidth]{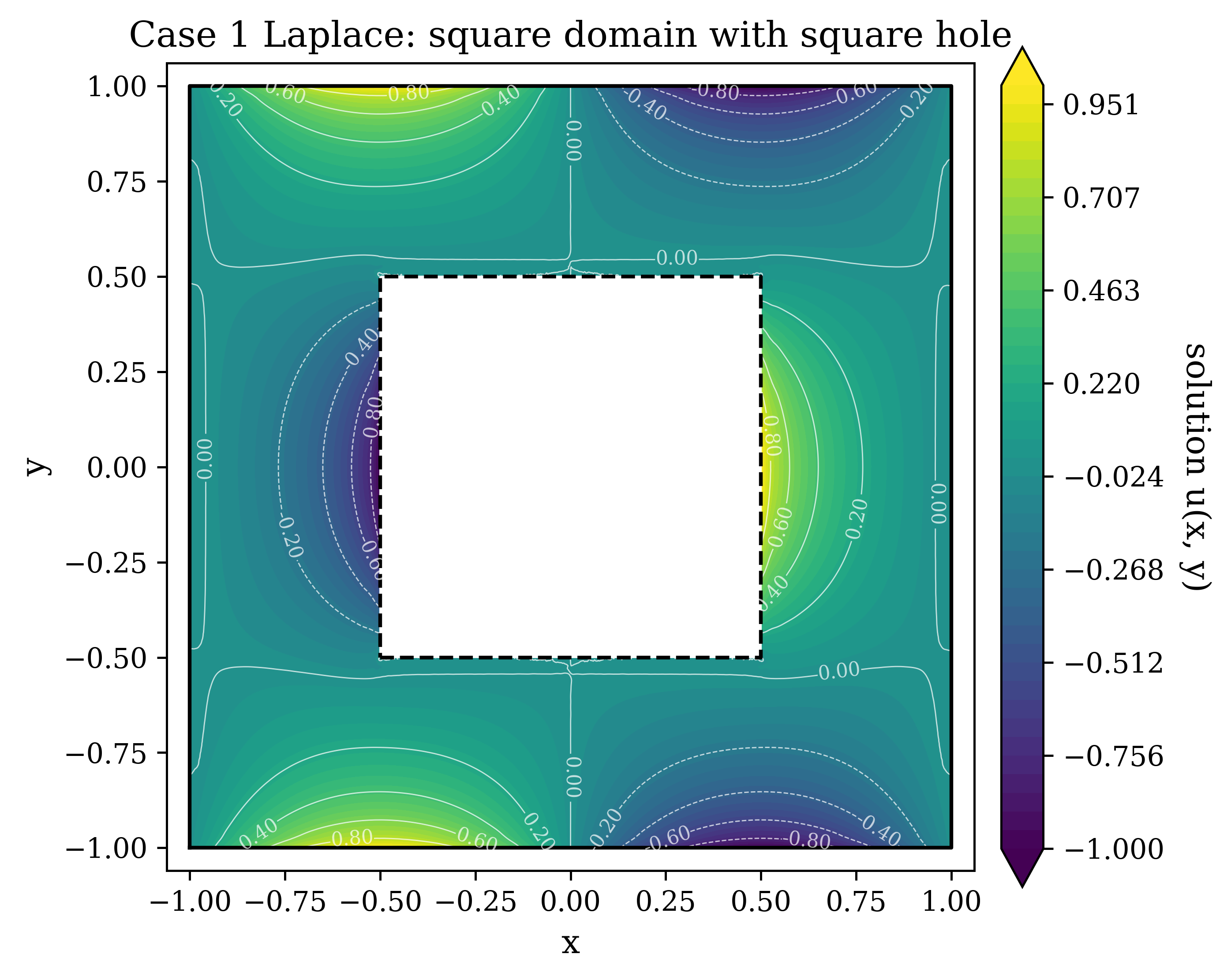}
  \caption{Numerical solution contour for the 2D Doubly Connected Domain. The bounded and oscillatory nature of $g(\x)$ effectively suppresses the variance blow-up induced by heavy-tailed fractional jumps.}
  \label{fig:case1}
\end{figure}

\subsection*{Case 2: 3D Asymmetric Non-convex Domain}
This case tests the algorithm's capability to handle sharp re-entrant corners in 3D without suffering from meshing singularities typical in Finite Element Methods (FEM).
\begin{itemize}
    \item \textbf{Domain:} $D = [-1, 1]^3 \setminus [0, 1]^3$ (A cube missing one octant).
    \item \textbf{Boundary Condition:} $g(\x) = x_1 + x_2 - x_3$ for $\x=(x_1,x_2,x_3) \in D^c$.
\end{itemize}

\begin{figure}[H]
  \centering
  \includegraphics[width=1\textwidth]{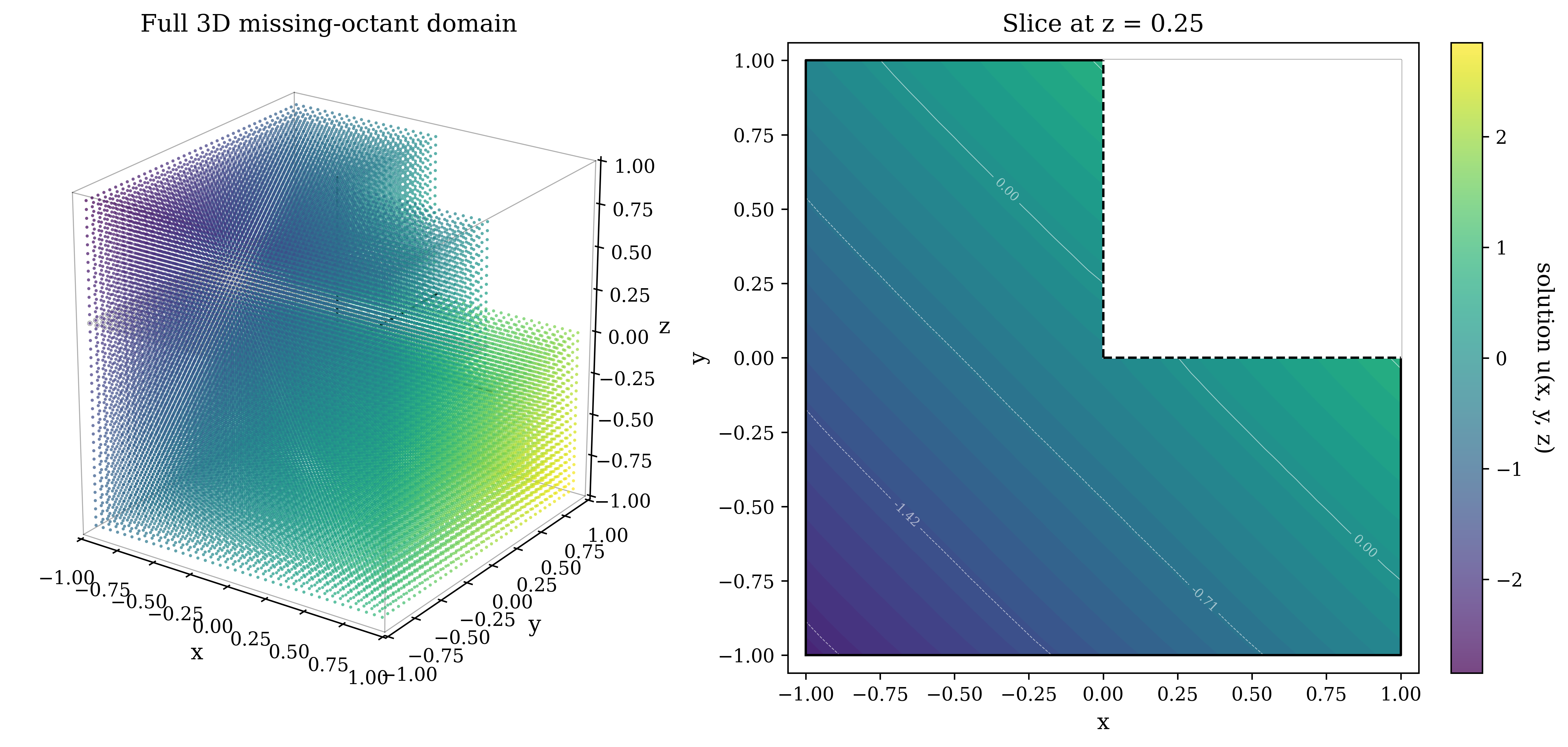}
  \caption{Numerical solution volume rendering for the 3D Missing Octant Domain.}
  \label{fig:case2}
\end{figure}

\subsection*{Case 3: 2D Unit Disk}
Simulating the WoC algorithm on curvilinear domains requires careful formulation of the maximal inscribed $L_\infty$ cube to maintain $O(1)$ jump efficiency.
\begin{itemize}
    \item \textbf{Domain:} $D = \{ \x=(x_1,x_2) \mid x_1^2 + x_2^2 < 1 \}$ (Unit Disk).
    \item \textbf{Boundary Condition:} $g(\x) = \cos(x_1) + \sin(x_2)$ for $\x\in D^c$.
\end{itemize}

\begin{figure}[H]
  \centering
  \includegraphics[width=0.8\textwidth]{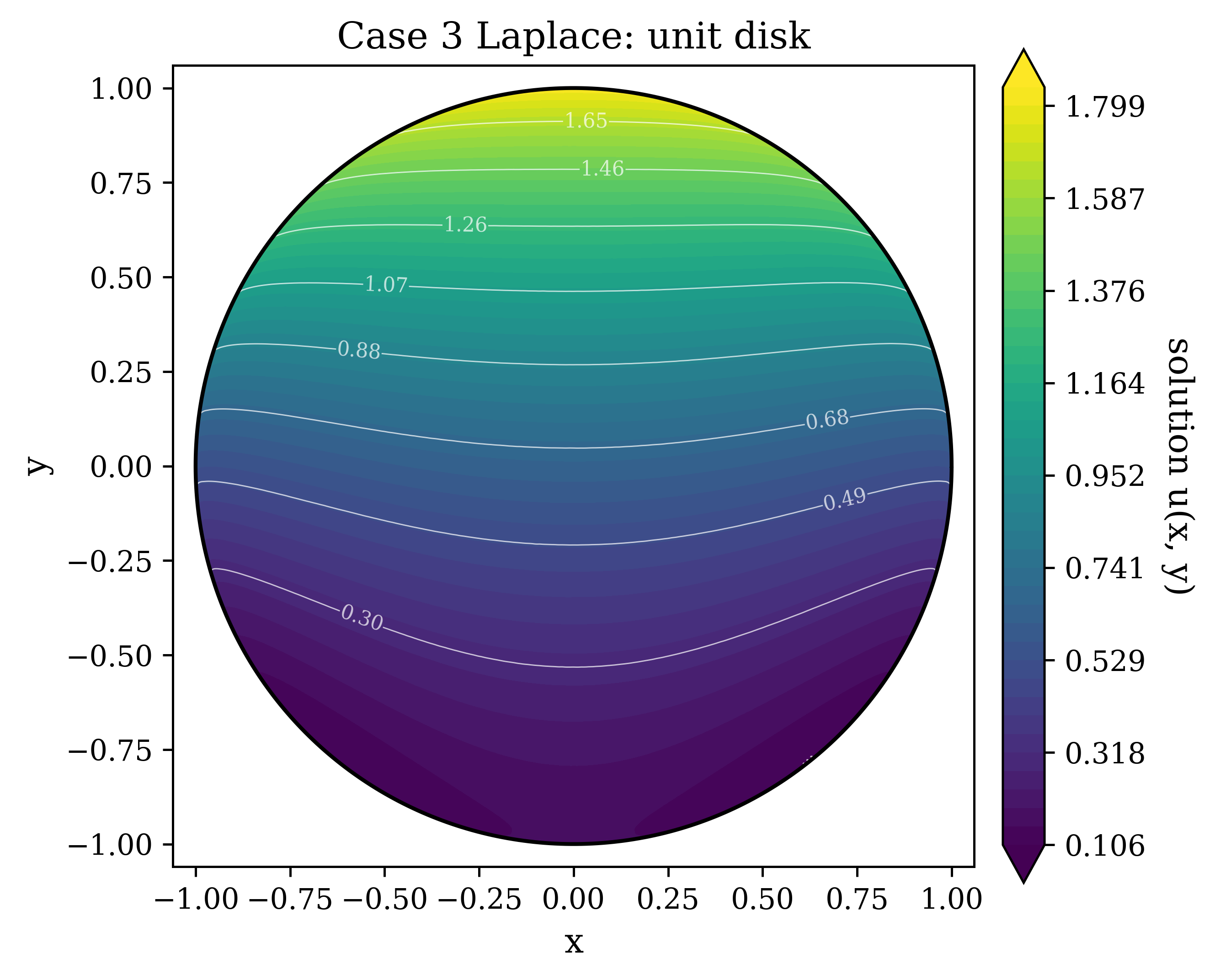}
  \caption{Numerical solution for the 2D Unit Disk Domain.}
  \label{fig:case3}
\end{figure}

\subsection*{Case 4: 3D Spherical Shell}
This case extends the geometric distance derivation to 3D and tests a bounded exponential boundary condition.
\begin{itemize}
    \item \textbf{Domain:} $D = \{ \x=(x_1,x_2,x_3) \mid 0.5 < |\x| < 1.0 \}$ (Spherical Shell).
    \item \textbf{Boundary Condition:} $g(\x) = \exp(-|\x|^2)$ for $\x\in D^c$.
\end{itemize}

\begin{figure}[H]
  \centering
  \includegraphics[width=1\textwidth]{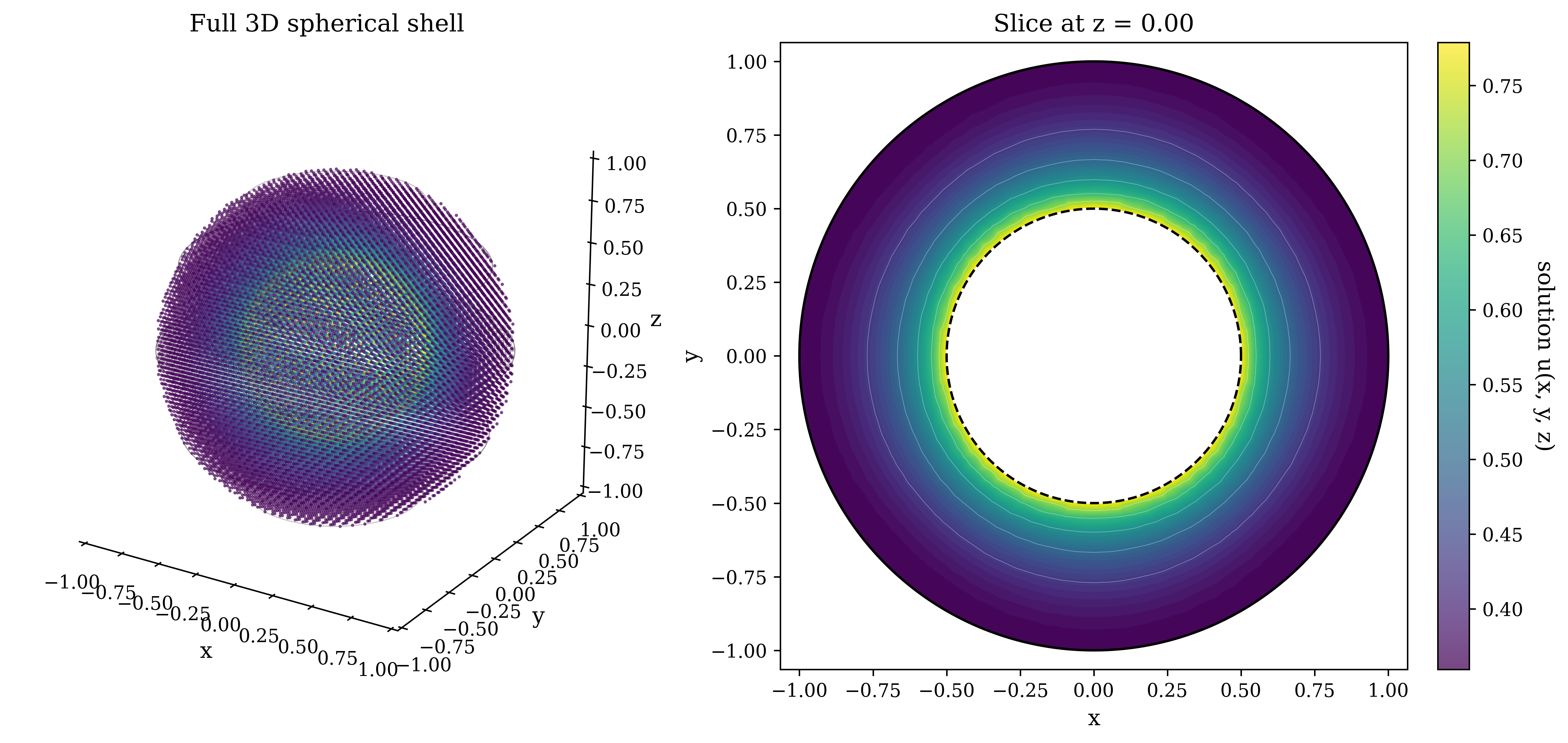}
  \caption{Slice rendering of the numerical solution within the 3D Spherical Shell Domain.}
  \label{fig:case4}
\end{figure}

%%%%%%%%%%%%%%%%%%%%%%%%%%%%%%%%%%%%%%%%%%%%%%%%%%%%%%%%%%%%%%%%%%%%%%%%%%%%%%%
%%%%%%%%%%%%%%%%%%%%%%%%%%%%%%%%%%%%%%%%%%%%%%%%%%%%%%%%%%%%%%%%%%%%%%%%%%%%%%%

%\section{Examples}

%BLAHBLAHBLAH

%%%%%%%%%%%%%%%%%%%%%%%%%%%%%%%%%%%%%%%%%%%%%%%%%%%%%%%%%%%%%%%%%%%%%%%%%%%%%%%
%%%%%%%%%%%%%%%%%%%%%%%%%%%%%%%%%%%%%%%%%%%%%%%%%%%%%%%%%%%%%%%%%%%%%%%%%%%%%%%
\bibliographystyle{siam}
\bibliography{pipliateekki}
\end{document}